\newtheorem{remark}{Remark}[section]
\newcommand{\ssymbol}[1]{^{\@fnsymbol{#1}}}
\newtheorem{exe}{Exemple}
\title{Tensor extrapolation methods with applications 	 }
\author{ F.P.A Beik \thanks{Department of Mathematics, Vali-e-Asr University of Rafsanjan,\ P.O. Box 518, Rafsanjan, Iran} \and A. El Ichi \footnotemark[2]  \and K. Jbilou \thanks{LMPA, 50 rue F. Buisson, ULCO Calais, France; jbilou@univ-littoral.fr } \and R. Sadaka\thanks{Department of Mathematics University Mohammed V Rabat, Morocco}}
\date{}
\begin{document}

	\maketitle

 %	\date{Received: date / Accepted: date}
	% The correct dates will be entered by the editor

	%\maketitle

\begin{abstract}
In this paper, we mainly develop the well-known vector and matrix polynomial extrapolation methods in
tensor framework. To this end, some new products between tensors are defined and the concept of positive definitiveness
is extended for tensors corresponding to T-product. Furthermore, we discuss on the solution of least-squares problem associated with
a tensor equation using Tensor Singular Value Decomposition (TSVD). Motivated by the effectiveness of proposed vector extrapolation method in [Numer. Algorithms,  51 (2009), 195--208], we describe how an extrapolation technique can be also implemented on the sequence of tensors  produced by truncated TSVD (TTSVD) for solving possibly ill-posed tensor equations.\\
\end{abstract}

\noindent {\bf Keywords.} Extrapolation; Sequence of tensors;  Tensor SVD; T-products; Positive definite tensor; Least-squares problem; ill-posed problem.

% \end{frontmatter}

\medskip

\noindent {\bf 2010 AMS Subject Classification} {65B05, 15A69, 15A72, 65F22. }
	%====================================================================
	
	%===========
	%\vspace*{1cm}*

\section{Introduction}

In the last few years,  several iterative methods   have been proposed for solving large and sparse  linear and nonlinear systems of equations. When an iterative process converges slowly, the
extrapolation methods are required to obtain rapid convergence. The  purpose of vector extrapolation methods is  to transform a sequence of vector or matrices generated by some process to a new one  that  converges faster than the initial sequence.   The well known extrapolation methods  can be  classified into two categories, the polynomial methods that  includes   the minimal polynomial extrapolation (MPE) method of Cabay and Jackson \cite{B101}, the modified minimal  polynomial extrapolation (MMPE) method of Sidi, Ford ans Smith \cite{B0301}, the reduced rank extrapolation (RRE) method of Eddy \cite{B03} and Mesina \cite{B3011},  Brezinski \cite{B0032}  and Pugatchev \cite{B132}, and  the $\epsilon$-type algorithms including the topological $\epsilon$-algorithm  of Brezinski \cite{B0032} and the vector $\epsilon$-algorithm of Wynn \cite{Wynn}. \\
Efficient implementations of some of these extrapolation methods have been proposed by Sidi \cite{B03005}  for the RRE and MPE  methods using QR decomposition while Jbilou and sadok \cite{B350} gives   an efficient implementation of the MMPE based on a LU  decomposition with pivoting strategy. It was also shown  that when applied to linearly generated vector sequences, RRE  and TEA methods are mathematically equivalent to GMRES and Lanczos methods, respectively.  Those results were also extended to the block and global cases when dealing with matrix sequences, see \cite{B2071,B360}. Our aim in this paper is to define the analogue of these vector and matrix extrapolation methods to the tensor framework.\\
\noindent Basically, in the present paper, we develop some tensor extrapolation methods namely, the Tensor RRE (TRRE), the Tensor MPE (TMPE), the Tensor MMPE (TMMPE) and the Tensor Topological $\epsilon$-Algorithm (TTEA). We give some properties and show how these new tensor extrapolation methods can be applied to sequences obtained by truncation of the Tensor Singular Value Decomposition (TSVD) when applied to linear tensor discrete ill-posed problems.\\

\noindent The remainder of this paper is organized as follows.  Before ending this section, we recall some fundamental concepts in tensor framework.  In Section 2, we  give notations, some basic definitions and properties related to tensors. Moreover, we introduce the concept of positive definiteness for tensors with respect to T-product, some new products are also defined between tensors and their properties are analyzed. In Section 3, we introduce the    tensor versions of the vector polynomial extrapolation methods namely the   Tensor  Reduced Rank Extrapolation (TRRE), the Tensor  Minimal Polynomial Extrapolation (TMPE) and the   Tensor Modified Minimal Polynomial Extrapolation (TMMPE),
the Tensor Topological  $\epsilon$-Algorithm (TTEA).
Section 4  describes the TSVD, the truncated or low rank  version of TSVD and shows how to apply  the Tensor Reduced Rank Extrapolation method together with the truncated TSVD, to the solution of linear discrete tensor ill-posed problems.

%%  		\begin{figure}[!h]
%% 	\begin{center}
%% 	\includegraphics[scale=0.4]{TSVD.png}
%% 		\caption{  TSVD of third order tensor. }
%% 		\label{fig:tsvd12}
%% 	\end{center}
%% \end{figure}

\noindent \textit{Preliminaries:} A tensor is  a multidimensional array of data. The number of indices of a tensor is called modes or ways. 
Notice that a scalar can be regarded as a zero mode tensor, first mode tensors are vectors and matrices are second mode tensor.
For a given N-mode tensor $ \mathscr {X}\in \mathbb{R}^{n_{1}\times n_{2}\times n_{3}\ldots \times n_{N}}$, the notation $x_{i_{1},\ldots,i_{N}}$ (with $1\leq i_{j}\leq n_{j}$ and $ j=1,\ldots N $) stand for the element $\left(i_{1},\ldots,i_{N} \right) $ of the tensor $\mathscr {X}$. The norm of a tensor $\mathscr{X}\in \mathbb{R}^{n_1\times n_2\times \cdots \times n_\ell}$ is specified by
\[
\left\| \mathscr{X} \right\|^2 = {\sum\limits_{i_1  = 1}^{n_1 } {\sum\limits_{i_2  = 1}^{n_2 } {\cdots\sum\limits_{i_\ell = 1}^{n_\ell} {x_{i_1 i_2 \cdots i_\ell }^2 } } } }^{}.
\] Corresponding to a given tensor $ \mathscr {X}\in \mathbb{R}^{n_{1}\times n_{2}\times n_{3}\ldots \times n_{N}}$, the notation $$ \mathscr {X}_{\underbrace{::\ldots:}_{(N-1)-\text{ times}}k}\; \; {\rm for }  \quad k=1,2,\ldots,n_{N}$$ denotes a tensor in $\mathbb{R}^{n_{1}\times n_{2}\times n_{3}\ldots \times n_{N-1}}$ which is obtained by fixing the last index and is called frontal slice. Fibers are the higher-order analogue of matrix rows and columns. A fiber is
defined by fixing all the indexes  except  one. A matrix column is a mode-1 fiber and a matrix row is a mode-2 fiber. Third-order tensors have column, row and tube fibers. An element $c\in \mathbb{R}^{1\times 1 \times n}$ is called a tubal--scalar of length $n$ \cite{B029}. More details are found in  \cite{B129,B031}.  \\

\begin{figure}[!h]
	\begin{center}
		\caption{  (a) Frontal, (b) horizontal, and (c) lateral slices of a third   order tensor.   (d) A  mode-3 tube fibers. }
		\label{fig:fibre111}
	\end{center}
\end{figure}

		 \section{Definitions and new tensor products}\label{sec:section2}
		
		 The current section is concerned with two main parts. In the first part, we recall definitions and properties related to T-product. Furthermore, we develop the definition of positive definiteness for tensors and establish some basic results. The second part deals with presenting some new products between tensors which can be used for simplifying the algebraic computations of the main results.
		
		 \subsection{Definitions and properties}
		 In this part, we briefly review some concepts and notations related to the T-Product, see \cite{C220,B010,B029,B0290} for more details. 	
		 \begin{definition}
		 	The \textbf{T-product} ($\ast$) between  two tensors
		 	$\mathscr {X} \in \mathbb{R}^{n_{1}\times n_{2}\times n_{3}} $ and $\mathscr {Y} \in \mathbb{R}^{n_{2}\times m_{2}\times n_{3}} $ is an  ${n_{1}\times m_{2}\times n_{3}}$ tensor  given by:	
		 	$$\mathscr {X}\ast\mathscr {Y}={\rm Fold}({\rm bcirc}(\mathscr {X}){\rm MatVec}(\mathscr {Y}) )  $$
		 	where
		 	\[{\rm bcirc}(\mathscr {X})=\left( {\begin{array}{*{20}{c}}
		 		{{X_1}}&{{X_{{n_3}}}}&{{X_{{n_{3 - 1}}}}}& \ldots &{{X_2}}\\
		 		{{X_2}}&{{X_1}}&{{X_{{n_3}}}}& \ldots &{{X_3}}\\
		 		\vdots & \ddots & \ddots & \ddots & \vdots \\
		 		{{X_{{n_3}}}}&{{X_{{n_{3 - 1}}}}}& \ddots &{{X_2}}&{{X_1}}
		 		\end{array}} \right)\in \mathbb{R}^{n_{1}n_{3}\times m_{2}n_{3}}\]
		 	$${\rm MatVec}(\mathscr {Y} ) = \begin{pmatrix}
		 	Y_{1}   \\
		 	Y_{2}   \\
		 \vdots \\
		 	Y_{n_{3}}\end{pmatrix} \in \mathbb{R}^{n_{2}n_{3}\times m_{2}} ,  \qquad {\rm Fold }({\rm MatVec}(\mathscr {Y}) ) =  \mathscr {Y}$$
		 \end{definition}
		 here  for $i=1,\ldots,n_3$, $X_i$ and $Y_i$ are frontal slices of the tensors $\mathscr {X}$ and $\mathscr {Y}$, respectively.\\
		 The $n_{1} \times n_{1}\times n_{3}$ identity tensor $\mathscr{I}_{ n_{1}  n_{1}  n_{3}}$ is the tensor whose first frontal slice is the $ n_{1} \times n_{1}$ identity matrix, and whose other frontal slices are all zeros, that is
		 $${\rm MatVec}(  {\mathscr{I}_{ n_{1}n_{1}n_{3}}})  = \begin{pmatrix}
		 I_{n_{1}n_{1}}   \\
		 0 _{n_{1}n_{1}} \\
		 \vdots \\
		 0_{n_{1}n_{1}}\end{pmatrix}$$
		 where $I_{n_{1}n_{1}}$ is the identity matrix. \\
		 In the special case in which $n_1=1$, the identity tensor is a tubal--scalar and denoted by ${\rm \bf e}$,       in other words ${\rm MatVec}({\rm \bf e})  =(1,0,0\ldots,0)^T$.\\
		
		 \begin{definition} We have the following definitions
		 	
		 	\begin{enumerate}
		 		\item An $n_{1}\times n_{1} \times n_{3}$ tensor $\mathscr{A}$ is invertible, if there exists a tensor $\mathscr{B}$ of order  $n_{1}\times n_{1} \times n_{3}$  such that
		 		$$\mathscr{A}\ast \mathscr{B}=\mathscr{I}_{ n_{1}  n_{1}  n_{3}} \qquad \text{and}\qquad \mathscr{B}\ast \mathscr{A}=\mathscr{I}_{ n_{1}  n_{1}  n_{3}}$$
		 		It is clear that 	$\mathscr{A}$ is invertible if and only if   ${\rm bcirc}(\mathscr{A})$ is invertible (see \cite{B30112}).
		 		\item  If $\mathscr{A} $ is  an $n_{1} \times n_{2}\times n_{3}$ tensor, then $\mathscr{A}^{T}$ is the  $n_{2} \times n_{1}\times n_{3}$   tensor obtained
		 		by transposing each of the front-back frontal slices and then reversing the order of transposed frontal slices 2 through $n_{3}$.
		 	\end{enumerate}
		 \end{definition}
		
		 \noindent \begin{exe}
		 	If $\mathscr{A} \in \mathbb{R}^{n_{1}\times n_{2}\times 5}$ and its frontal slices are given by the $n_{1}\times n_{2}$
		 	matrices $A_{1}, A_{2}, A_{3}, A_{4}, A_{5}$, then
		 	$$\mathscr{A}^{T} ={\rm Fold} \begin{pmatrix}
		 	A_{1}^{T}   \\
		 	A_{5}^{T} \\
		 	A_{4}^{T} \\
		 	A_{3}^{T}\\
		 	A_{2}^{T} \\
		 	\end{pmatrix} $$
		 	
		 \end{exe}

\begin{remark}\label{rem1.1new}
	As pointed out earlier the tensor $\mathscr{A}$ of order $m\times m \times n_{}$ is invertible iff ${\rm bcirc}(\mathscr{A})$ is invertible. It is equivalent to say that  $\mathscr{A}$ is invertible iff
	 $\mathscr{A} \ast \mathscr{X}=\mathscr{O}$ implies $\mathscr{X}=\mathscr{O}$ where $\mathscr{X}\in \mathbb{R}^{m\times 1 \times n}$ and $\mathscr{O}$ is zero tensor.
\end{remark}

\noindent Here we define the notion of positive definiteness for tensors in term of T-product which can be seen as a natural extension of the same concept for matrices.

\begin{definition}
The tensor $\mathscr{A}\in \mathbb{R}^{m\times m \times n_{}}$
is said to be positive (semi) definite if $$(\mathscr{X}^T \ast \mathscr{A} \ast \mathscr{X})_{::1} > (\ge ) 0,$$
for all nonzero tensors $\mathscr{X}\in \mathbb{R}^{m\times 1 \times n}$.
%Notice that
%if $\mathscr{A}\in \mathbb{R}^{m\times m \times n_{}}$ is positive definite then for any nonzero tensor
%$\mathscr{X}\in \mathbb{R}^{m\times s \times n}$,
%$${\rm{trace}}((\mathscr{X}^T \ast \mathscr{A} \ast \mathscr{X})_{::1}) =\sum_{\ell=1}^{s} {\rm{trace}}\left(\left((\mathscr{X}(:,\ell,:))^T \ast \mathscr{A} \ast \mathscr{X}(:,\ell,:)\right)_{::1}\right)  > (\ge ) 0.$$
\end{definition}

\begin{remark}\label{rem1.2new}
In view of Remark \ref{rem1.1new}, it is immediate to see that every positive definite tensor is invertible. For any tensor $B\in \mathbb{R}^{n_1\times n_2 \times n_3}$ and arbitrary nonzero tensor $\mathscr{X}\in \mathbb{R}^{n_2\times 1 \times n_3}$,  it can be seen that $$(\mathscr{X}^T \ast \mathscr{B}^T\ast  \mathscr{B} \ast \mathscr{X})_{::1}=((\mathscr{B} \ast \mathscr{X})^T\ast  \mathscr{B} \ast \mathscr{X})_{::1}=\|\mathscr{B} \ast \mathscr{X}\|^2 \ge 0,$$	
	in which the last equality follows from \cite{B0290}. As a result, the tensor $\mathscr{B}^T\ast \mathscr{B}$
	is positive semi definite. Evidently, for any scalar $\epsilon > 0$,
	 $$(\mathscr{X}^T \ast (\mathscr{B}^T\ast \mathscr{B}+\epsilon~ \mathscr{I}_{n_2n_2n_3}) \ast \mathscr{X})_{::1}=\|\mathscr{B} \ast \mathscr{X}\|^2+\epsilon  \|\mathscr{X}\|^2>0,$$
	This shows that the tensor  $\mathscr{B}^T\ast \mathscr{B}+\epsilon~ \mathscr{I}_{n_2n_2n_3}$ is positive definite.
\end{remark}
		 \medskip
		 \begin{definition}
		 	Let $\mathscr{A}\in \mathbb{R}^{n_1\times n_2 \times n_3}$. Then the tensor $ \mathscr{X}\in  \mathbb{R}^{n_2\times n_1 \times n_3}$ satisfying the following four conditions\\
		 	\begin{tabular}{ccccc}
		 		{\rm (a)} &$\mathscr{A}\ast \mathscr{X} \ast \mathscr{A}=\mathscr{A}$ && {\rm (b)}& $\mathscr{X}\ast \mathscr{A} \ast \mathscr{X}=\mathscr{X}$,\\
		 		{\rm (c)} &$(\mathscr{A} \ast \mathscr{X})^T=\mathscr{A} \ast \mathscr{X}$, &&
		 		{\rm (d)} & $(\mathscr{X}\ast \mathscr{A})^T = \mathscr{X}\ast \mathscr{A}$.
		 	\end{tabular}\\
		 	is called the Moore-Penrose inverse of $\mathscr{A}$ and denoted by $\mathscr{A}^\dagger$.
		 \end{definition}   		
		 \medskip		
		 \noindent In view of \cite[Lemmas 3.3 and 3.16]{B0290} and by using straightforward computations, one can easily observe that conditions (a)-(d) determine $\mathscr{A}^\dagger$ uniquely. Here we further comment that if $\mathscr{A}$ is invertible, then $\mathscr{A}^\dagger =\mathscr{A}^{-1}$.\\
		 For  $\mathscr{X},\mathscr{Y}$ two tensors in $\mathbb{R}^{n_{1}\times 1\times n_{3}}$,
		 theT-scalar product $\left\langle {.,.} \right\rangle$
		 is a bilinear form   %of  $\mathbb{R}^{n_{1}\times 1\times n_{3}}$
		 defined by:
		 \begin{align}\label{bilinearform3d}
		 %\left\langle {\bar{X}, \bar{Y}} \right\rangle:=
		 \begin{cases}
		 \mathbb{R}^{n_{1}\times 1\times n_{3}}\times \mathbb{R}^{n_{1}\times 1\times n_{3}}   & \longrightarrow  \mathbb{R}^{1\times 1\times n_{3}} \\
		 \qquad  \qquad (\mathscr{X},    \mathscr{Y})\qquad   &\longrightarrow  \langle \mathscr{X}, \mathscr{Y} \rangle =  \mathscr{X}^{T}\ast \mathscr{Y}
		 \end{cases}.
		 \end{align}
		 Let  $\mathscr{X}_{1},\ldots, \mathscr{X}_{\ell}$ a collection of $\ell$ third tensors in
		 $\mathbb{R}^{n_{1}\times 1\times n_{3}}$, if
		 \begin{align*}
		 %\label{bilinearform3d1}
		 \left\langle {\mathscr{X}_{i}, \mathscr{X}_{j}} \right\rangle =
		 \begin{cases}
		 \alpha_{i}{\rm \bf e}&i= j \\
		 0&i\neq j
		 \end{cases}.
		 \end{align*}
		 where  $\alpha_{i}$ is a non-zero scalar, then the set $\mathscr{X}_{1},\ldots, \mathscr{X}_{\ell}$  is said to be an  orthogonal collection of tensors.  The collection is called orthonormal if $\alpha_{i}=1$, $i=1,\ldots,l$.

		 \begin{definition}
		 	An $n\times n \times \ell$ real-values tensor $\mathscr{Q}$ is said to be is orthogonal if $\mathscr{Q}^T \ast \mathscr{Q} =\mathscr{Q} \ast \mathscr{Q}^T =\mathscr{I}_{nn\ell}$.
		 \end{definition}

		 \noindent We end this part with the following proposition.
		
		 \begin{proposition}\label{prop2.1}
		 	Suppose that $\mathscr{A}$ and $\mathscr{B}$ are two tensors of order $n_1\times n_2 \times n_3$. Then
		 	\[
		 	(\mathscr{A}^T\ast \mathscr{B})_{ij:} =(\mathscr{A}(:,i,:))^T \ast \mathscr{B}(:,j,:).
		 	\]
		 \end{proposition}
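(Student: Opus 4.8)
The plan is to reduce the identity to ordinary matrix algebra on block-circulant matrices, which is exactly what $\ast$ unfolds to by definition. Both sides are tensors in $\mathbb{R}^{1\times1\times n_3}$, so it suffices to check that, for each $p\in\{1,\dots,n_3\}$, their $p$-th frontal slices (which are scalars) coincide. Two elementary facts about the block layout will be used, both read off directly from the definitions: writing $\overline{k}$ for the representative of $k$ modulo $n_3$ in $\{1,\dots,n_3\}$, the $(p,q)$ block of ${\rm bcirc}(\mathscr{X})$ is $X_{\overline{\,p-q+1\,}}$, and --- using that the tensor transpose transposes every frontal slice and then reverses slices $2,\dots,n_3$, equivalently ${\rm bcirc}(\mathscr{X}^{T})={\rm bcirc}(\mathscr{X})^{T}$ --- the $(p,q)$ block of ${\rm bcirc}(\mathscr{A}^{T})$ is $\bigl(A_{\overline{\,q-p+1\,}}\bigr)^{T}$.

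First I would unwind the left-hand side. By definition $\mathscr{A}^{T}\ast\mathscr{B}={\rm Fold}\bigl({\rm bcirc}(\mathscr{A}^{T})\,{\rm MatVec}(\mathscr{B})\bigr)$, and since ${\rm MatVec}$ merely stacks frontal slices, the $p$-th frontal slice of $\mathscr{A}^{T}\ast\mathscr{B}$ is the $p$-th $n_2\times n_2$ block of that matrix product, namely $C_p=\sum_{q=1}^{n_3}\bigl(A_{\overline{\,q-p+1\,}}\bigr)^{T}B_q$. Reading off entry $(i,j)$ and using that the $i$-th row of $A_s^{T}$ is the transpose of the $i$-th column of $A_s$ gives
\[
C_p(i,j)=\sum_{q=1}^{n_3}\bigl(A_{\overline{\,q-p+1\,}}(:,i)\bigr)^{T}B_q(:,j).
\]

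Next I would unwind the right-hand side in exactly the same way. Put $\mathscr{u}=\mathscr{A}(:,i,:)$ and $\mathscr{v}=\mathscr{B}(:,j,:)$, which lie in $\mathbb{R}^{n_1\times1\times n_3}$ and whose $s$-th frontal slices are the columns $A_s(:,i)$ and $B_s(:,j)$. Applying the definition of $\ast$ to $\mathscr{u}^{T}\ast\mathscr{v}$ and repeating the block computation above with $\mathscr{u}$ in the role of $\mathscr{A}$, the $p$-th frontal slice of $\mathscr{u}^{T}\ast\mathscr{v}$ comes out to be $\sum_{q=1}^{n_3}\bigl(A_{\overline{\,q-p+1\,}}(:,i)\bigr)^{T}B_q(:,j)$, which is precisely $C_p(i,j)$. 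Letting $p$ range over $\{1,\dots,n_3\}$, this says the two $1\times1\times n_3$ tensors agree, which is the assertion.

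A structural alternative that avoids the index bookkeeping is available once associativity of $\ast$ and the rule $(\mathscr{X}\ast\mathscr{Y})^{T}=\mathscr{Y}^{T}\ast\mathscr{X}^{T}$ are granted (both standard for the T-product): let $\mathscr{E}_k\in\mathbb{R}^{n_2\times1\times n_3}$ be the tensor whose first frontal slice is the $k$-th unit vector and whose other slices vanish. A one-line check from the definition shows $\mathscr{A}\ast\mathscr{E}_i=\mathscr{A}(:,i,:)$, $\mathscr{C}\ast\mathscr{E}_j=\mathscr{C}(:,j,:)$ for $\mathscr{C}=\mathscr{A}^{T}\ast\mathscr{B}$, and that $\mathscr{E}_i^{T}\ast\mathscr{Z}$ extracts the $i$-th tube fiber of a lateral slice $\mathscr{Z}$, whence
\[
(\mathscr{A}^{T}\ast\mathscr{B})_{ij:}=\mathscr{E}_i^{T}\ast(\mathscr{A}^{T}\ast\mathscr{B})\ast\mathscr{E}_j=(\mathscr{A}\ast\mathscr{E}_i)^{T}\ast(\mathscr{B}\ast\mathscr{E}_j)=(\mathscr{A}(:,i,:))^{T}\ast\mathscr{B}(:,j,:).
\]
In either route the only real obstacle is keeping the modular indices straight --- how the transpose relabels and transposes frontal slices and how that meshes with the circulant arrangement of blocks; there is no conceptual difficulty.
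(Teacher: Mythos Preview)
Your proposal is correct. Your ``structural alternative'' is precisely the paper's own proof: the paper introduces the unit tensors $\widetilde{\mathscr{I}}_\tau$ (your $\mathscr{E}_k$), checks that $\mathscr{B}\ast\widetilde{\mathscr{I}}_j=\mathscr{B}(:,j,:)$ and $\widetilde{\mathscr{I}}_i^{T}\ast\mathscr{A}^{T}=(\mathscr{A}(:,i,:))^{T}$, observes that $(\mathscr{A}^{T}\ast\mathscr{B})_{ij:}=\widetilde{\mathscr{I}}_i^{T}\ast(\mathscr{A}^{T}\ast\mathscr{B})\ast\widetilde{\mathscr{I}}_j$, and then invokes associativity of $\ast$ from \cite{B0290}. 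Your primary route --- unfolding both sides to block-circulant matrix products and matching scalars frontal-slice by frontal-slice --- is a genuinely different, more elementary argument: it is self-contained and does not rely on associativity or the transpose rule as external lemmas, at the modest cost of the modular-index bookkeeping you flag. Either route is fine; the paper's version is shorter to write but leans on cited facts, while your direct computation makes the identity transparent from first principles.
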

		 \begin{proof}
		 	Let $\widetilde{\mathscr{I}}_\tau$ be  an $n_2\times 1 \times n_3$ tensor
		 	whose all frontal slices are zero except its first frontal being the $\tau$-th column of the $n_2\times n_2$ identity matrix. Then we have
		 	\[
		 	\mathscr{B} \ast \widetilde{\mathscr{I}}_j = \mathscr{B}(:,j,:)\qquad
		 	\text{and}
		 	\qquad
		 	\widetilde{\mathscr{I}}_i^T\ast  \mathscr{A}^T=\mathscr{A}^T(i,:,:)=(\mathscr{A}(:,i,:))^T.
		 	\]
		 	Notice also  that
		 	\[
		 	(\mathscr{A}^T\ast \mathscr{B})_{ij:}=	\widetilde{\mathscr{I}}_i^T\ast (\mathscr{A}^T\ast \mathscr{B})\ast \widetilde{\mathscr{I}}_j.
		 	\]
		 	From \cite[Lemma 3.3]{B0290}, it is known that
		 	\[
		 	\tilde{\mathscr{I}}_i^T\ast (\mathscr{A}^T\ast \mathscr{B})\ast \tilde{\mathscr{I}}_j = (\tilde{\mathscr{I}}_i^T\ast \mathscr{A}^T)\ast (\mathscr{B} \ast \tilde{\mathscr{I}}_j),
		 	\]
		 	which completes the proof. 
		 \end{proof}

		 \subsection{New tensor  products }
		 In order to simplify derivation of generalized extrapolation methods in tensor format, we need to define  new tensor products
		 .
		 \begin{definition} \label{def2.3}
		 	Let $\mathscr{A}$ and $\mathscr{B}$ be 4-mode tensors with frontal slices $\mathscr{A}_{i}\in \mathbb{R}^{n_1\times n_2 \times n_3}$ and $\mathscr{B}_{j}\in \mathbb{R}^{n_1\times n_2 \times n_3}$  for $i=1,2,\ldots,\ell$ and $j=1,2,\ldots,k$, respectively. The product $\mathscr{A} \diamondsuit  \mathscr{B} $ is defined as a 5-mode tensor of order $n_2 \times n_2  \times n_{3} \times k \times \ell$ defined as follows:
		 	$$(\mathscr{A}\diamondsuit  \mathscr{B})_{:::ji}  = \mathscr{A}_i^T \ast \mathscr{B}_j,
		 	$$
		 	for $i=1,2,\ldots,\ell$ and $j=1,2,\ldots,k$. In the case where  $k=1$, i.e. $\mathscr{B}\in \mathbb{R}^{n_1\times s \times n_3}$, $\mathscr{A} \diamondsuit  \mathscr{B} $ is a 4-mode tenor whose $i$-th frontal slice is given by $\mathscr{A}_i^T \ast \mathscr{B}$ for $i=1,2,\ldots,\ell$.
		 \end{definition}
		
		 \medskip
		 \noindent Here we comment that the $\diamondsuit$	product can be seen as a generalization of $\diamond$-product between two matrices given in \cite{B220}. In the sequel, we further present an alternative product called $\star$-product which can be seen as an extension
		 of the $\ast$-product between a set of matrices and vectors; see \cite{Jbilou} for more details.\\
		 \medskip
		
		 \begin{definition}
		 	Let $\mathscr{A}$ be a 5-mode tensor of order $n_2 \times n_1  \times n_{3} \times k \times \ell$  and $\mathscr{B}$ be a 4-mode tensor with frontal slices $\mathscr{B}_1,\ldots,\mathscr{B}_k\in \mathbb{R}^{n_1\times n_2 \times n_3}$. The product $\mathscr{A} \star \mathscr{B}$ is defined as a 4-mode tensor of order
		 	$n_2 \times n_2  \times n_{3} \times \ell$ such that
		 	\[
		 	(\mathscr{A} \star \mathscr{B})_{:::i} =  \sum_{j=1}^{k} \mathscr{A}_{:::ji} \ast \mathscr{B}_j,\quad i=1,2,\ldots,\ell.
		 	\]
		 	As a natural way, for the case that $\mathscr{A}$ is a 4-mode tensor of order $n_2 \times n_1  \times n_{3} \times \ell$ then $\mathscr{A} \star \mathscr{B}\in \mathbb{R}^{n_2 \times n_2  \times n_{3}}$ defined by  $\mathscr{A} \star \mathscr{B}= \sum_{\eta=1}^{k} \mathscr{A}_{\eta} \ast \mathscr{B}_\eta$ where $\mathscr{A}_{i}$ stands for the $i$-th frontal slice of $\mathscr{A}$ for $i=1,2,\ldots,\ell$.
		 \end{definition}
		
		 \medskip
		 \noindent
		 In the main results, it is worth to define a notion
		 of the left inverse of a 5-mode tensors dealing with $\ast$, $\star$ and $\diamondsuit$ products.
		 To this end, we define the $\bar{\star}$-product
		 between two 5-mode tensors which can be  seen  as an extension of $\star$ product.\\
		
		 \medskip
		
		 \begin{definition}
		 	Let $\mathscr{A}\in \mathbb{R}^{n_1\times n_2 \times n_3 \times k \times \ell}$
		 	and $\mathscr{B}\in \mathbb{R}^{n_2\times n_1 \times n_3 \times k \times \ell}$, the product  $\mathscr{A}~ \bar{\star}~ \mathscr{B}$ is a 5-mode tensor of order $n_1\times n_1 \times n_3 \times k \times k$ such that for $\tau,\eta=1,2,\ldots,k$,
		 	\[
		 	(\mathscr{A}~ \bar{\star}~ \mathscr{B})_{:::\tau \eta} = \sum_{j=1}^{\ell} \mathscr{A}_{:::\eta j} \ast \mathscr{B}_{:::\tau j}.
		 	\]
		 \end{definition}
		
		 \begin{definition}\label{def2.7}
		 	The tensor $\mathscr{B}^{+}\in \mathbb{R}^{n_1\times n_2 \times n_3 \times k \times \ell}$ is called a left inverse of $\mathscr{B}\in \mathbb{R}^{n_2\times n_1 \times n_3 \times k \times \ell}$, if
		 	\begin{align*}
		 	(\mathscr{B}^{+}~ \bar{\star}~ \mathscr{B})_{:::\tau \eta} =
		 	\begin{cases}
		 	\mathscr{I}_{n_1n_1n_3}  &\tau = \eta \\
		 	\mathscr{O}   &\tau \ne \eta
		 	\end{cases}
		 	\end{align*}
		 	for $\tau,\eta=1,2,\ldots,k$. Here $\mathscr{O} $ is the 3-mode zero tensor of order $n_1\times n_1\times n_3$.
		 \end{definition}
		
		 \medskip
		 \noindent Now we establish a proposition which reveals the relation between the two  proposed products  $\star$ and $\bar{\star}$.
		
		 \begin{proposition}
		 	Let $\mathscr{A}\in \mathbb{R}^{n_1\times n_2 \times n_3 \times k \times k}$, $\mathscr{B}\in \mathbb{R}^{n_2\times n_1 \times n_3 \times k \times k}$ and
		 	$\mathscr{Y} \in \mathbb{R}^{n_1 \times n_2 \times  n_3 \times k}$. Then, the following relation holds
		 	\[
		 	(\mathscr{A}^{}~ \bar{\star}~ \mathscr{B})\star \mathscr{Y} =\mathscr{A}^*\star  (\mathscr{B}\star \mathscr{Y}).
		 	\]
		 	Here $\mathscr{A}^*$ stands for an 5-mode tensor of order ${n_1\times n_2 \times n_3 \times k \times k}$ associated with  $\mathscr{A}$ where $\mathscr{A}^*_{:::ij}=\mathscr{A}_{:::ji}$ for $1\le i,j\le k$.
		 \end{proposition}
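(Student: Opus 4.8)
The identity to prove is an associativity-type relation linking the three products $\bar\star$, $\star$, and the slice-transpose operation $(\cdot)^*$. The plan is to verify it slice by slice, that is, to compare the $i$-th frontal slice (with respect to the fifth mode) of each side for $i=1,2,\dots,k$, reducing everything to identities among ordinary $\ast$-products of $3$-mode tensors, where associativity and bilinearity are already available from \cite{B0290}.

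First I would expand the left-hand side. By definition of $\star$ applied to the $5$-mode tensor $\mathscr{A}\,\bar\star\,\mathscr{B}$ (which lives in $\mathbb{R}^{n_1\times n_1\times n_3\times k\times k}$) and the $4$-mode tensor $\mathscr{Y}$,
\[
\bigl((\mathscr{A}\,\bar\star\,\mathscr{B})\star \mathscr{Y}\bigr)_{:::i}
=\sum_{\tau=1}^{k}(\mathscr{A}\,\bar\star\,\mathscr{B})_{:::\tau i}\ast \mathscr{Y}_\tau
=\sum_{\tau=1}^{k}\Bigl(\sum_{j=1}^{k}\mathscr{A}_{:::i j}\ast \mathscr{B}_{:::\tau j}\Bigr)\ast \mathscr{Y}_\tau,
\]
using the definition of $\bar\star$ with the index pair $(\tau,\eta)=(\tau,i)$. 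Then, using associativity and bilinearity of the $\ast$-product on $3$-mode tensors (\cite[Lemma 3.3]{B0290}), I would push the outer sum inside and regroup:
\[
\bigl((\mathscr{A}\,\bar\star\,\mathscr{B})\star \mathscr{Y}\bigr)_{:::i}
=\sum_{j=1}^{k}\mathscr{A}_{:::i j}\ast\Bigl(\sum_{\tau=1}^{k}\mathscr{B}_{:::\tau j}\ast \mathscr{Y}_\tau\Bigr).
\]

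Next I would expand the right-hand side and match. The inner factor $\mathscr{B}\star \mathscr{Y}$ is a $4$-mode tensor with $j$-th slice $(\mathscr{B}\star \mathscr{Y})_{:::j}=\sum_{\tau=1}^{k}\mathscr{B}_{:::\tau j}\ast \mathscr{Y}_\tau$, which is exactly the parenthesized sum above. Applying $\mathscr{A}^*\star(\cdot)$ and recalling $\mathscr{A}^*_{:::ij}=\mathscr{A}_{:::ji}$ gives
\[
\bigl(\mathscr{A}^*\star(\mathscr{B}\star \mathscr{Y})\bigr)_{:::i}
=\sum_{j=1}^{k}\mathscr{A}^*_{:::j i}\ast(\mathscr{B}\star \mathscr{Y})_{:::j}
=\sum_{j=1}^{k}\mathscr{A}_{:::i j}\ast\Bigl(\sum_{\tau=1}^{k}\mathscr{B}_{:::\tau j}\ast \mathscr{Y}_\tau\Bigr),
\]
which is termwise identical to the expression obtained for the left-hand side. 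Since the $i$-th slices agree for every $i$, the two $4$-mode tensors coincide.

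The only genuine subtlety — and the step I would be most careful about — is bookkeeping of the fifth-mode indices when unwinding the definitions of $\bar\star$ and $\star$: the definition of $\bar\star$ pairs $\mathscr{A}_{:::\eta j}$ with $\mathscr{B}_{:::\tau j}$ (note the "crossed" placement of $\tau$ and $\eta$), and the definition of $\mathscr{A}^*$ transposes exactly those two indices, so the crossing must cancel precisely. I would write the index substitutions explicitly (as above) rather than leave them implicit. Everything else is a routine application of the associativity and distributivity of the $\ast$-product already recorded in \cite[Lemma 3.3]{B0290}, together with the finite sums commuting past $\ast$.
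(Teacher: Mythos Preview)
Your proposal is correct and follows essentially the same approach as the paper's own proof: both arguments compare the $i$-th frontal slices of each side, unwind the definitions of $\star$ and $\bar\star$, and use associativity/bilinearity of the T-product to regroup the double sum. The only cosmetic difference is that the paper writes the computation as a single chain of equalities from the left-hand slice to the right-hand slice, whereas you expand both sides separately and then match; the index bookkeeping is identical.
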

		 \begin{proof}
		 	It is clear that both side of the above relation are 4-mode tensors of order $n_1\times n_2\times n_3 \times k$. To prove the assertion, we show that the frontal slices of both sides are equal. Let $1\le z \le k$, we can observe that
		 	\begin{eqnarray*}
		 		\left((\mathscr{A}^{}~ \bar{\star}~ \mathscr{B})\star \mathscr{Y}\right)_{:::z} & = &
		 		\sum_{\ell=1}^{k} (\mathscr{A}^{}~ \bar{\star}~ \mathscr{B})_{:::\ell z} \ast \mathscr{Y}_{\ell} \\
		 		& = &  \sum_{\ell=1}^{k}  \sum_{\mu=1}^{k} \mathscr{A}_{:::z \mu} \ast \mathscr{B}_{:::\ell \mu} \ast \mathscr{Y}_{\ell} \\
		 		& = & \sum_{\mu=1}^{k} \mathscr{A}_{:::z \mu} \ast \left(\sum_{\ell=1}^{k} \mathscr{B}_{:::\ell \mu} \ast \mathscr{Y}_{\ell} \right)\\
		 		& = & \sum_{\mu=1}^{k} \mathscr{A}_{:::z \mu} \ast  \left(\mathscr{B} \star \mathscr{Y}_{} \right)_{:::\mu}\\
		 		& = &  \sum_{\mu=1}^{k} \mathscr{A}^*_{:::\mu z} \ast  \left(\mathscr{B} \star \mathscr{Y}_{} \right)_{:::\mu}= ( \mathscr{A}^* \star (\mathscr{B} \star \mathscr{Y}_{}))_{:::z}.
		 	\end{eqnarray*}
		 The result follows immediately from the above computations.
		 \end{proof}
		
		 %% \begin{remark}\label{rem2.2}
		 %%In the next section, in theory, we face to system of tensor equations in the following form
		 %%\[
		 %%\mathscr{A} \star \alpha =\mathscr{B},
		 %%\]
		 %%where $\mathscr{A}$ is 5-mode tensor and the 4-mode tensor $\alpha$ is unknown. For our special application, we are able to simply solve the above equation. However, in theoretical point of view, it turns out that the solution of above equation can be derived using the left inverse  (given in Definition \ref{def2.7}). Basically, if $\mathscr{C}=\mathscr{A}^+$ then $\mathscr{C}^*\star (\mathscr{A} \star \alpha) =\mathscr{C}^*\star \mathscr{B}$. Now, from the previous proposition, we get $\alpha=\mathscr{C}^*\star \mathscr{B}$.
		 %% \end{remark}

\noindent  We end this section by a proposition which can be established by using straightforward
		 algebraic computations.
		
		 \begin{proposition} \label{diamantpropos}
		 	Let $\mathscr{A},\mathscr{B},\mathscr{C}\in {\mathbb R}^{n_{1}\times s \times n_{3}\times \ell}$ and $\mathscr{D}\in {\mathbb R}^{s\times n_2 \times n_{3}}$. The following statements hold:
		 	\begin{enumerate}
		 		\item $ (\mathscr{A}+\mathscr{B})\diamondsuit \mathscr{C} = \mathscr{A}\diamondsuit \mathscr{C} +\mathscr{B}\diamondsuit \mathscr{C}$
		 		
		 		\item
		 		$\mathscr{A}\diamondsuit(\mathscr{B}+ \mathscr{C}) = \mathscr{A}\diamondsuit \mathscr{B} +\mathscr{A}\diamondsuit \mathscr{C}$

		 		\item $(\mathscr{A}\diamondsuit \mathscr{B})\star \mathscr{D}=\mathscr{A} \diamondsuit (\mathscr{B} \star \mathscr{D})$.		
		 		% 		\item
		 		% 		$ (\mathscr{D}\ast \mathscr{A})^{T}\diamondsuit \mathscr{B} = \mathscr{A}^{T}\diamondsuit (\mathscr{D}^{T}\ast \mathscr{B})$
		 		% 		
		 		% 		\item
		 		% 		$ \mathscr{A}^{T}\diamondsuit (\mathscr{B}\ast(\mathscr{L}\circledast I_{s})) = (\mathscr{A}^{T}\diamondsuit \mathscr{B})\ast \mathscr{L}$
		 		
		 	\end{enumerate}

		 \end{proposition}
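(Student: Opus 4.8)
The plan is to verify each of the three identities by comparing the relevant ``generalized frontal slices'' on both sides, exactly in the style of the proof of the previous proposition. All three statements assert equality of tensors of the same order, so it suffices to check that they agree slice-by-slice; the underlying facts needed are bilinearity of $\ast$ over tensor addition (which follows from the corresponding property of ordinary matrix multiplication via the ${\rm bcirc}$ representation) and the associativity $\mathscr{U}\ast(\mathscr{V}\ast\mathscr{W})=(\mathscr{U}\ast\mathscr{V})\ast\mathscr{W}$ from \cite[Lemma 3.3]{B0290}, both already invoked earlier in the excerpt.

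For part (1), I would fix an index $i$ with $1\le i\le\ell$ and unfold the definition of $\diamondsuit$ for a 4-mode right factor: since $\mathscr{C}\in{\mathbb R}^{n_1\times s\times n_3}$ plays the role of the $k=1$ case, the $i$-th frontal slice of $(\mathscr{A}+\mathscr{B})\diamondsuit\mathscr{C}$ is $(\mathscr{A}+\mathscr{B})_i^T\ast\mathscr{C}=(\mathscr{A}_i+\mathscr{B}_i)^T\ast\mathscr{C}$. Transposition distributes over addition of tensors (it acts slice-wise), and then $\ast$ distributes over the resulting sum, giving $\mathscr{A}_i^T\ast\mathscr{C}+\mathscr{B}_i^T\ast\mathscr{C}$, which is precisely the $i$-th frontal slice of $\mathscr{A}\diamondsuit\mathscr{C}+\mathscr{B}\diamondsuit\mathscr{C}$. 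Part (2) is symmetric: the $i$-th frontal slice of $\mathscr{A}\diamondsuit(\mathscr{B}+\mathscr{C})$ is $\mathscr{A}_i^T\ast(\mathscr{B}+\mathscr{C})$, and left-distributivity of $\ast$ over tensor addition splits this as $\mathscr{A}_i^T\ast\mathscr{B}+\mathscr{A}_i^T\ast\mathscr{C}$.

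For part (3), the objects on both sides are 3-mode tensors of order $n_1\times n_2\times n_3$ (here $\mathscr{A}\diamondsuit\mathscr{B}$ is a 4-mode tensor of order $n_1\times n_1\times n_3\times\ell$ whose $i$-th frontal slice is $\mathscr{A}_i^T\ast\mathscr{B}$, wait—re-examine: with $\mathscr{B}\in{\mathbb R}^{n_1\times s\times n_3\times\ell}$ acting as the $k=1$ case, $\mathscr{A}\diamondsuit\mathscr{B}$ is 4-mode with $i$-th slice $\mathscr{A}_i^T\ast\mathscr{B}$, and then $\star$ contracts against $\mathscr{D}$). Using the second form of the $\star$-product, $(\mathscr{A}\diamondsuit\mathscr{B})\star\mathscr{D}=\sum_{\eta=1}^{\ell}(\mathscr{A}\diamondsuit\mathscr{B})_\eta\ast\mathscr{D}=\sum_{\eta=1}^{\ell}(\mathscr{A}_\eta^T\ast\mathscr{B})\ast\mathscr{D}$; on the other hand, $\mathscr{B}\star\mathscr{D}$ has $\eta$-th frontal slice $\mathscr{B}_\eta\ast\mathscr{D}$, so $\mathscr{A}\diamondsuit(\mathscr{B}\star\mathscr{D})$ has $\eta$-th frontal slice $\mathscr{A}_\eta^T\ast(\mathscr{B}_\eta\ast\mathscr{D})$, and the two expressions match term-by-term after applying associativity of $\ast$. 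The only mild obstacle is bookkeeping: one must be careful about which mode counts are being used (the $k=1$ collapses, the roles of $\ell$ and $\eta$), and one must confirm that $\ast$-associativity applies in each grouping; but once the indices are pinned down, every step is a direct application of already-established properties, so no genuine difficulty arises.
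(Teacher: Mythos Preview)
Your slice-by-slice strategy using bilinearity and associativity of $\ast$ is exactly right, and indeed the paper offers nothing beyond ``straightforward algebraic computations'' in place of a proof. Parts (1) and (2) go through, though you misread $\mathscr{C}$ as 3-mode: the hypothesis gives $\mathscr{C}\in\mathbb{R}^{n_1\times s\times n_3\times\ell}$, so $\mathscr{A}\diamondsuit\mathscr{C}$ is 5-mode and you should be checking the $(j,i)$-slice $(\mathscr{A}_i+\mathscr{B}_i)^T\ast\mathscr{C}_j$. The distributivity argument is otherwise unchanged.

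Part (3), however, has real bookkeeping errors that your own ``wait---re-examine'' flagged but did not resolve. Since $\mathscr{A},\mathscr{B}$ are both 4-mode with $\ell$ frontal slices, Definition~\ref{def2.3} makes $\mathscr{A}\diamondsuit\mathscr{B}$ a \emph{5-mode} tensor of order $s\times s\times n_3\times\ell\times\ell$ with $(\mathscr{A}\diamondsuit\mathscr{B})_{:::ji}=\mathscr{A}_i^T\ast\mathscr{B}_j$; it is not 4-mode with $i$-th slice ``$\mathscr{A}_i^T\ast\mathscr{B}$'' as you wrote. Consequently you must use the 5-mode form of $\star$ on the left, with $\mathscr{D}$ a 4-mode tensor having frontal slices $\mathscr{D}_1,\ldots,\mathscr{D}_\ell$ (the stated $\mathscr{D}\in\mathbb{R}^{s\times n_2\times n_3}$ is evidently a typo; compare the application to $\alpha_k$ in Section~\ref{sub3.1}). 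This gives
\[
((\mathscr{A}\diamondsuit\mathscr{B})\star\mathscr{D})_{:::i}=\sum_{j=1}^{\ell}(\mathscr{A}_i^T\ast\mathscr{B}_j)\ast\mathscr{D}_j.
\]
On the right, $\mathscr{B}\star\mathscr{D}=\sum_{j=1}^{\ell}\mathscr{B}_j\ast\mathscr{D}_j$ is a single 3-mode tensor---it does \emph{not} have an ``$\eta$-th frontal slice $\mathscr{B}_\eta\ast\mathscr{D}$'' as you claimed---so by the $k=1$ clause of Definition~\ref{def2.3},
\[
(\mathscr{A}\diamondsuit(\mathscr{B}\star\mathscr{D}))_{:::i}=\mathscr{A}_i^T\ast\sum_{j=1}^{\ell}\mathscr{B}_j\ast\mathscr{D}_j.
\]
Now distributivity and associativity of $\ast$ finish the job. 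So your plan is sound; only the index tracking in (3) needed to be redone.
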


\section{Extrapolation methods based on tensor formats }\label{section3}
In this section, we define new tensor extrapolation methods. In the first part, we present in the tensor polynomial-type extrapolation methods by using the new tensor products introduced in the preceding section. In the second part, a tensor topological $\epsilon$-algorithm is developed. We notice that when we are dealing with vectors, all these new methods become just the classical vector extrapolation methods.

\subsection{Tensor polynomial-type extrapolation methods}\label{sub3.1} Corresponding to a given  sequence of tensors $(\bar{S}_{n})$ in $\mathbb{R}^{n_{1}\times n_2 \times n_{3} }$, we consider the transformation $(\bar{T}_{k})$ defined by
		\begin{align}\label{Tnk}
		{T}_{k}( {S}_{n} )={T} _{k}^{(n)}:=
		\begin{cases}
	\mathbb{R}^{n_{1}\times n_2\times n_{3}}    &\to \mathbb{R}^{n_{1}\times n_2\times n_{3} } \\
		\quad {S}_{n}  &\mapsto {T} _{k}^{(n)}=  {S}_{n}+{\mathscr{G}}_{k,n} \star \alpha^{}_k
		%+ \sum_{i=1}^{k} \bar{G}_{i}(n)\ast\alpha^{(n)}_{i}
		\end{cases}
		\end{align}
		where the 4-mode ${\mathscr{G}}_{k,n}$  with frontal slices ${G}_{i}(n)  \in \mathbb{R}^{n_{1}\times n_2\times n_{3} }$ is given for $i=1,\ldots,k$. The 4-mode tensor $\alpha^{}_k$ is unknown whose frontal slices are denoted by $\alpha^{(n)}_{i} \in \mathbb{R}^{n_2\times n_2\times n_{3}}$ for $i=1,\ldots,k$. As the vector and matrix case, in extrapolation methods, we aim to determine  the unknown tensors. To this end, we use the transformation $\widetilde{T}_{k}^{(n)}$  obtained from ${T} _{k}^{(n)}$ as follows:
		\begin{align*}
		%\label{Tkntilde}
		\widetilde{T}_{k}^{(n)}=\widetilde{T}_{k}(  S_{n} )=   S_{n+1}+   {\mathscr{G}}_{k,n+1} \star \alpha^{}_k,
		\end{align*}
		here, the 4-mode ${\mathscr{G}}_{k,n+1}$  has frontal slices $ {G}_{i}(n+1)  \in \mathbb{R}^{n_{1}\times n_2\times n_{3} }$ for $i=1,\ldots,k$.
		Let $\Delta$ denote the forward difference operator on the index $n$ such that $\Delta {S}_{n}=  {S}_{n+1}-{S}_{n}$ and $\Delta {\mathscr{G}}_{k,n} $ stand for
		the 4-mode tensor whose frontal slices are given by $\Delta {G}_{i}(n)={G}_{i}(n+1)-{G}_{i}(n)$  for $ i=1,\ldots,k$.
		The generalized residual of
		${T}^{(n)}_{k}$ is represented by ${R}({T}^{(n)}_{k})$  defined as follows:
		\begin{align}\label{formresisue1}
		\nonumber {R}({T}^{(n)}_{k})&=\widetilde{T}_{k}( S_{n} )- {T}_{k}( {S}_{n} )\\
		&=\Delta {S}_{n} +\Delta {\mathscr{G}}_{k,n}\star \alpha^{}_k
		\end{align}

		\noindent For an arbitrary given set of tensors  ${Y}_{1}^{(n)}, \ldots,{Y}_{k}^{(n)}\in \mathbb{R}^{n_{1}\times n_2 \times n_{3} }$, let $\widetilde{\mathbf{H}}_{k,n} $ and  $\widetilde{\mathbf{L}}_{k,n}$  denote the subspaces     generated by $ \Delta {G}_{1}(n),\ldots,\Delta {G}_{k}(n)$  and   ${Y}_{1}^{(n)}, \ldots,{Y}_{k}^{(n)}$ respectively. Evidently, we have
		\begin{align}\label{residuerelationvector}
		{R}({T}^{(n)}_{k})-\Delta {S}_{n}\in \widetilde{\mathbf{H}}_{k,n}
		\end{align}
		and the unknown tensors $\alpha_i^{(n)}$ are determined by imposing the following condition,
		\begin{align*}
		%\label{residuerelation1vector}
		{R}({T}^{(n)}_{k})\in \widetilde{\mathbf{L}}_{k,n}^{\perp}.
		\end{align*}
		
		\noindent Let $\mathscr{L}_{k,n}$ be a 4-mode tensor with frontal slices ${Y}_{1}^{(n)}, \ldots,{Y}_{k}^{(n)}$,
		% $\Delta \mathscr{H}_{k}(n)=[\Delta \bar{G}_{1}(n),\ldots,\Delta \bar{G}_{k}(n)]\in  \mathbb{R}^{n_{1}\times k \times n_{3} }$,
		the above relation  can be equivalently expressed by	
		% \begin{align}\label{formresisue2}
		% \tilde{R}(\bar{T}^{(n)}_{k})-\Delta \bar{S}_{n}&=\Delta \tilde{\mathscr{G}}_{k,n}\star \alpha^{(n)}_k 	\end{align}
		% and
		\begin{align}
		%\label{formresisue3}
		\mathscr{L}_{k,n}&\diamondsuit {R}({T}^{(n)}_{k})=\mathscr{O},
		\end{align}
		where $\mathscr{O}$ is a zero tensor of order $1\times 1 \times n_3 \times k$.
		Therefore, from Proposition \ref{diamantpropos}, we have
		\begin{eqnarray*}
			\mathscr{O} & = &\mathscr{L}_{k,n} \diamondsuit {R}({T}^{(n)}_{k})\\
			& = & \mathscr{L}_{k,n}\diamondsuit (\Delta {S}_{n} +\Delta {\mathscr{G}}_{k,n}\star \alpha^{}_k)\\
			& = & \mathscr{L}_{k,n} \diamondsuit \Delta {S}_{n} + \mathscr{L}_{k,n} \diamondsuit  (\Delta {\mathscr{G}}_{k,n}\star \alpha^{}_k)= \mathscr{L}_{k,n} \diamondsuit \Delta {S}_{n} + (\mathscr{L}_{k,n} \diamondsuit \Delta {\mathscr{G}}_{k,n})\star \alpha^{}_k.
		\end{eqnarray*}
		
		\noindent In fact the unknown tensor $\alpha_k$ can be seen as the solution of thee following tensor equation,
		\[
		(\mathscr{L}_{k,n} \diamondsuit \Delta {\mathscr{G}}_{k,n})\star \alpha^{}_k =-\mathscr{L}_{k,n} \diamondsuit \Delta {S}_{n}.
		\]
		%In Remark \ref{rem2.2}, we briefly explain about explicit formula for the solution
		%of the above tensor equation. It can be seen in Section \ref{TRRETTSVD0}, in
		%our mentioned application of the extrapolation technique,
	
		% Assuming that $(\mathscr{L}_{k,n}^{T}\ast\Delta{ \mathscr{H}_{k}(n)})$ is nonsingular, using (\ref{formresisue1}),   the
		%tensor $\bar{ A}^{(n)}$ involved   in the expression ($\ref{formresisue2}$)  of the generalized residual 	$\tilde{R}(\bar{T}^{(n)}_{k})$  is given as:
		%
		% \begin{align}\label{alphan}
		% \bar{ A}^{(n)}=-(\mathscr{L}_{k,n}^{T}\ast\Delta{ \mathscr{H}_{k}(n)})^{-1}(\mathscr{L}_{k,n}^{T}\ast\Delta{ \bar{S}_{n}})
		% \end{align}
		%
		% The approximation $\bar{T}^{(n)}_{k}$  is given by :
		% \begin{align}\label{resid7}
		% \bar{T}_{k}^{(n)}=\bar{S}_{n}+  \mathscr{H}_{k}(n)\ast\bar{ A}^{(n)}
		% \end{align}
		%where $ \mathscr{H}_{k}(n)=[\bar{G}_{1}(n),\ldots,\bar G_{k}(n)] \in \mathbb{R}^{n_{1}\times k \times n_{3} } $.
		\noindent The choices of sequence of tensors $ {G}_{1}(n),\ldots,{G}_{k}(n)$  and   ${Y}_{1}^{(n)},\ldots,{Y}_{k}^{(n)}$ determine the type of  the  tensor  polynomial extrapolation method. In  fact, for all these polynomial-type  methods, the auxiliary sequence of  tensors  is given by ${G}_{i}(n)=\Delta {S}_{n+i-1}$ for $ i=1,\ldots,k$ ($n\geq0$). The following choices for ${Y}_{1}^{(n)},\ldots, {Y}_{k}^{(n)}$ can be used,
		\begin{align*}
		{Y}_{i}^{(n)}&= \Delta {S}_{n+i-1} \hspace{0.65cm}\text{for TMPE},\\
		{Y}_{i}^{(n)}&= \Delta^{2} {S}_{n+i-1}  \hspace{0.5cm}\text{for TRRE},\\
		{Y}_{i}^{(n)}&=  {Y}_{i}\hspace{1.6cm}  \text{for TMMPE},
		\end{align*}
		where the operator $\Delta^{2}$ refers to the second forward difference with respect to the index $n$ such that $$\Delta^{2} {S}_{n}=  \Delta {S}_{n+1}-  \Delta {S}_{n}\quad \text{and}
		\quad\Delta^{2} {G}_{i}(n)=\Delta {G}_{i}(n+1)-\Delta {G}_{i}(n),$$
		for $i=1,\ldots,k$.
		The approximation ${T}_{k}^{(n)}$  produced by TMPE, TRRE and TMMPE  can be also expressed  as follows:
		\begin{align*}
		{T}_{k}^{(n)}= \sum_{j=0}^{k} {S}_{n+j} \ast \gamma_{j}^{(k)}
		\end{align*}
		and the unknown tensors $\gamma_{0}^{(k)},\gamma_{1}^{(k)},\ldots,\gamma_{k}^{(k)}$ are determined by imposing the following condition
		\begin{align}\label{resid71}
		\sum_{j=0}^{k}   \gamma_{j}^{(k)}={\mathscr{I}_{n_2n_2n_3}}   \quad\text{and }\quad\sum_{j=0}^{k}\eta^{(n)}_{i,j}\ast\gamma_{j}^{(k)}=\mathscr{O}_{n_2n_2n_{3}}\quad 0\leq i<k
		\end{align}
		where  $\mathscr{O}_{n_2n_2n_{3}}\in {\mathbb R}^{n_2\times n_2 \times n_{3}} $ is the tensor which all  entries equal to zero,	 $\eta^{(n)}_{i,j}= ({Y}_{i+1}^{(n)})^T \ast \Delta {S} _{n+j}$ .\\
		
%		and
%		\begin{align*}
%		{Y}_{i+1}^{(n)}&= \Delta {S}_{n+i} \hspace{0.65cm} \quad \text{for TSMPE},\\
%		{Y}_{i+1}^{(n)}&= \Delta^{2} {S}_{n+i} \hspace{0.9cm} \text{for TSRRE},\\
%		{Y}_{i+1}^{(n)}&=   {Y}_{i+1} \hspace{1.3cm} \text{for TSMMPE}.
%		\end{align*}
		\noindent From now on and for simplification, we set $n=0$. The system of equations (\ref{resid71}) is given in the following form 
		\begin{align}\label{sysofequa2}
		\begin{cases}
		\gamma_{0}^{(k)} +\gamma_{1}^{(k)} +\cdots+\gamma_{k}^{(k)} =\mathscr{I}_{n_2n_2n_3}\\ \\
	 {({Y}_{1}^{(0)})^T\ast \Delta {S} _{0}} \ast\gamma_{0}^{(k)}+   {({Y}_{1}^{(0)})^T\ast \Delta {S} _{1}}  \ast\gamma_{1}^{(k)}+\cdots+  {({Y}_{1}^{(0)})^T\ast \Delta {S} _{k}}  \ast\gamma_{k}^{(k)}=\mathscr{O}_{n_2n_2n_{3}}\\\\
	 {({Y}_{2}^{(0)})^T \ast \Delta {S} _{0}}  \ast\gamma_{0}^{(k)}+  {({Y}_{2}^{(0)})^T\ast \Delta {S} _{1}}  \ast\gamma_{1}^{(k)}+\cdots+  {({Y}_{2}^{(0)})^T \ast \Delta {S} _{k}} \ast\gamma_{k}^{(k)}=\mathscr{O}_{n_2n_2n_{3}}\\
		\ldots\ldots\ldots\ldots\ldots\ldots\\
		({Y}_{k}^{(0)})^T \ast \Delta {S} _{0}\ast\gamma_{0}^{(k)}+  {({Y}_{k}^{(0)})^T \ast \Delta {S} _{1}}  \ast\gamma_{1}^{(k)}+\cdots+   {({Y}_{k}^{(0)})^T \ast \Delta {S} _{k}}  \ast\gamma_{k}^{k}=\mathscr{O}_{n_2n_2n_{3}}\\
		\end{cases}.	
		\end{align}
		% MathType!MTEF!2!1!+-
		% feaagKart1ev2aaatCvAUfeBSjuyZL2yd9gzLbvyNv2CaerbuLwBLn
		% hiov2DGi1BTfMBaeXatLxBI9gBaerbd9wDYLwzYbItLDharqqtubsr
		% 4rNCHbGeaGqiVu0Je9sqqrpepC0xbbL8F4rqqrFfpeea0xe9Lq-Jc9
		% vqaqpepm0xbba9pwe9Q8fs0-yqaqpepae9pg0FirpepeKkFr0xfr-x
		% fr-xb9adbaqaaeGaciGaaiaabeqaamaabaabaaGcbaWaaaWaaeaaai
		% aawMYicaGLQmcaaaa!37C6!
		
		\noindent Let $\beta_{i}^{(k)} =  \gamma_{i}^{(k)} \ast (\gamma_{k}^{(k)})^{-1} $  where $(\gamma_{k}^{(k)})^{-1} $ is the inverse of  $\gamma_{k}^{(k)}$, i.e.,   $\gamma_{k}^{(k)}\ast (\gamma_{k}^{(k)})^{-1}=\mathscr{I}_{n_2n_2n_3}$,  for $0\leq l\leq k$. Then, it is not difficult to verify that
		\begin{align}\label{37}
		\gamma_{i}^{(k)} = \beta_{i}^{(k)} \ast ({\sum_{i=0}^{k} \beta_{i}^{(k)} })^{-1}  \quad \text{for}\quad 0\leq l<k \quad \text{and} \quad  \beta_{k}^{(k)} =\mathscr{I}_{n_2n_2n_3}.
		\end{align} 	
		The system of equations  (\ref*{sysofequa2}) can be rewritten as follows 
		\begin{align}\label{sysofequa3}
		\begin{cases}
		{({Y}_{1}^{(0)})^T\ast \Delta {S} _{0}}  \ast\beta_{0}^{(k)}+   \cdots+  {({Y}_{1}^{(0)})^T \ast \Delta {S} _{k-1}}  \ast\beta_{k-1}^{(k)}=- {({Y}_{1}^{(0)})^T \ast \Delta  S _{k}}   \\
		\ldots\ldots\ldots\ldots\ldots\ldots\ldots\\
		 {({Y}_{k}^{(0)})^T\ast \Delta {S} _{0}} \ast\beta_{0}^{(k)} +\cdots+  {({Y}_{k}^{(0)})^T \ast \Delta {S} _{k-1}}  \ast\beta_{k-1}^{(k)}=- {({Y}_{k}^{(0)})^T \ast \Delta {S} _{k}}
		\end{cases}.	
		\end{align}
		The above system can be mentioned in the following form
		\begin{align}\label{betaq0}
		(\mathscr{L}_{k,n}\diamondsuit   \mathscr{V} _{k})\star {\bf \beta}_k =-(\mathscr{L}_{k,n}\diamondsuit \Delta{{S}_{k}})
		\end{align}
		where ${\bf  \beta}_k$ is  the  4-mode tensor with the $k$ frontal slices ${ \beta}^{(k)}_0,\ldots,{ \beta}^{(k)}_{k-1}$ and  $\mathscr{V} _{k}$ is a 4-mode tensor whose $i$-th frontal slice is given by $\Delta {S} _{i-1}$ for $i=1,2,\ldots,k$. \\
		Having $\gamma_{0} ,\gamma_{1} ,\ldots,\gamma_{k} $ computed, we set 
		\begin{align}\label{scalairesalpha}
		\alpha_{0}^{(k)} =\mathscr{I}_{n_2n_2n_3}-\gamma_{0}^{(k)} ,\quad \alpha_{j}^{(k)} =\alpha_{j-1}^{(k)} -\gamma_{j}^{(k)} ,\quad 1\leq j <k \quad \text{and}\quad \alpha_{k-1}^{(k)} =\gamma_{k}^{(k)}.
		\end{align}
		Setting  ${T}_{k}= {T}_{k}^{(0)}$, we get
		\begin{align}\label{resid13}
		{T}_{k} ={S}_{0}+ \sum_{j=0}^{k-1} {V}_{j} \ast\alpha_{j}^{(k)}   ={S}_{0}+  \mathscr{V}_{k} \star {\alpha }_k,
		\end{align}
		where $ {V}_{j}=\Delta {S} _{j}$ the $(j+1)$-th frontal slice of $\mathscr{V}_{k}$ for $j=0\ldots,k-1$ and
		${\alpha}_k$ is a 4-mode tensor with frontal slices ${ \alpha}^{(k)}_0,\ldots,{\alpha}^{(k)}_{k-1}$.
		%
		% where $\bar{ \alpha}^{(k)}=\begin{pmatrix}
		% \alpha_{0}^{(k)}\\
		% \vdots\\
		% \alpha_{k-1}^{(k)}
		% \end{pmatrix}\in  \mathbb{R}^{k\times 1\times n_{3} }$ and  $\mathscr{V} _{k-1}= [ \bar{V}_{0} ,\ldots, \bar{V} _{k-1} ]\in  \mathbb{R}^{n_{1}\times k \times n_{3} }$.
		To determine  $\gamma^{(k)}_i$ for $i=0,1,\ldots,k$, we first we need to compute  $\beta^{(k)}$ by solving system of equations (\ref{betaq0}). Using (\ref{formresisue1}), \eqref{scalairesalpha} and  (\ref{resid13}),  the generalized residual ${R}({T} _{k})$ can be also seen as follows:
		\begin{align}\label{betaq}
		{R}(T _{k})=\sum_{i=0}^{k} {V}_{i}\ast \gamma_{i}^{(k)}=\mathscr{V}_{k}\star \gamma_k
		\end{align}
		in which ${\gamma}_k$ and $\mathscr{V}_{k}$ are 4-mode tensors with whose $i$-th frontal slices are respectively given by ${\gamma}^{(k)}_{i-1}$ and $\bar{V}_{i-1}$ for $i=1,2,\ldots,k$.

		\subsection{The tensor toplogical $\epsilon$- transformation}\label{sub3.2}
		For vector sequences, Brezinski \cite{B0032}  proposed the well known  topological $\epsilon$-algorithm (TEA) which is a generalization of the scalar $\epsilon$-algorithm \cite{Wynn} known as a technique for transforming slowly convergent or divergent sequences.  In this section, we briefly see how to extend  this idea  in tensor framework and define the Tensor Toplogical $\epsilon$-Transformation (TTET).\\
		Let  $({S}_{n})$ a given  sequence of tensors in $\mathbb{R}^{n_{1}\times n_2\times n_{3} }$,   and
		consider approximations of tensors ${E}_{k}({S}_{n})={E}^{(n)}_{k}$ of the limit of sequence $({S}_{n})_{n\in \mathbb{N}} $  defined as
		\begin{align}\label{TTET}
		{E}_{k}^{(n)} = { S}_{n}+ \sum_{i=1}^{k}  \Delta {S}_{n+i-1}\ast\beta^{(n)}_{i},\quad  n\geq 0.
		\end{align}
		where $\beta^{(n)}_{i}\in \mathbb{R}^{n_2\times n_2 \times n_{3} }$ are  the unknown tensors  to be determined for $i=1\ldots,k$. We set
		\begin{align*}
		%\label{TTET1}
		\widetilde{{E}}_{k,j}^{(n)} = {  S}_{n+j}+ \sum_{i=1}^{k}  \Delta {S}_{n+i+j-1}\ast\beta^{(n)}_{i}   \quad j=1,\ldots,k,
		\end{align*}
		where $\widetilde{ {E}}_{k,0}^{(n)}={E}^{(n)}_{k}$. Let $ \widetilde{R}_{j}(\bar{E}^{(n)}_{k})$
		denote the $j$-th generalized residual tensor, i.e.,
		\begin{align*}
		%\label{TTETresidue}
		{R}_{j}({E}^{(n)}_{k})=\widetilde{{E}}_{k,j}^{(n)}-\widetilde{{E}}_{k,j-1}^{(n)}.
		\end{align*}
		For given third order tensor ${Y}\in   \mathbb{R}^{n_{1}\times n_2\times n_{3} }$ be given. The coefficients  $\beta^{(n)}_{i}$  in (\ref{TTET}) are computed  such that
		%each $j$-th generalized residual is orthogonal  to $\bar{Y}$ for $j=1,\ldots,k$, i.e.,
		\begin{align*}
		%\label{TTETorthogon}
		\sum_{i=1}^{k}  ({Y}^{T} \ast  \Delta^2 {S}_{n+i+j-1}) \ast\beta^{(n)}_{i}=\mathscr{O}, \qquad j=0,1\ldots,k-1.
		\end{align*}
		where $\mathscr{O}\in \mathbb{R}^{n_2\times n_2\times n_{3}}$. The above conditions result the following system of equations:
		\begin{align*}
		\begin{cases}
		({Y}^{T}\ast\Delta^{2} {S}_{n}) \ast\beta_{1}^{(n)}+\cdots+ ({Y}^{T}\ast 	\Delta^{2}  {S}_{n+k-1}) \ast\beta_{k}^{(n)} =-{Y}^{T}\ast	\Delta {S}_{n}  \\
		\qquad \vdots\\
		({Y}^{T}\ast\Delta^{2}  {S}_{n+k-1})  \ast\beta_{1}^{(n)} +\cdots+ ({Y}^{T}\ast	\Delta^{2}  {S}_{n+2k-2})\ast\beta_{k}^{(n)}=-{Y}^{T}\ast	\Delta {S}_{n+k-1}
		\end{cases}.	
		\end{align*}
		In the case that the above system is uniquely solvable we can  obtain ${E}^{(n)}_{k}$.

	 \section{Application of tensor extrapolation methods to solve ill-posed tensor problems }\label{TRRETTSVD0}
	
The purpose of this section is to adopt the idea used by  Jbilou et al. \cite{B366} for solving a class of ill-posed tensor equations. To this end, we start by recalling the truncated tensor SVD for the third order-tensors (TTSVD), then we prove a theorem
which gives the minimum norm (least-squares) solution of our mentioned tensor equation. In addition, we present an algorithm for approximating the Moore-Penros inverse of tensor. In the second subsection, we combine TTSVD with the TSRRE method to resolve tensor ill-posed problems.
	
	 \subsection{Truncated tensor singular value decomposition}\label{TTSVD0}
	
The truncated SVD (T-SVD) of matrices is efficient for approximating Moore--Penrose inverse and solving least-squares problem as the truncated version consumes less space of storage  in comparison with SVD when the rank of matrix is not very large. This inspired Miao et al. \cite{B30112} to extend the theory of tensor SVD (TSVD) \cite{B0290} to truncated tensor  SVD (TTSVD); here the decompositions are  based on the T-product of tensors. In this part, first, the TSVD and TTSVD are recalled.  Then we derive the explicit form for minimum norm solution of least-square problem in tensor framework.
	
	 In the following, an  F-diagonal tensor refers to a third order tensor whose all frontal slices are diagonal. \\
	 \begin{theorem}\cite{B0290} \label{th4.1}
	 	Let $\mathscr{A}\in\mathbb{R}^{n_1\times n_2\times n_3}$ be a real valued-tensor, then  	there exists  orthogonal tensors  $\mathscr{U}\in \mathbb{R}^{n_1\times n_1\times n_3} $ and $\mathscr{V}\in \mathbb{R}^{n_2\times n_2\times n_3} $ such that
	 	\begin{align}\label{TCSVD}
	 	\mathscr{A}=\mathscr{U}\ast\mathscr{S}\ast \mathscr{V}^{T}
	 	\end{align}
	 	in which $\mathscr{S}\in\mathbb{R}^{n_1\times n_2\times n_3} $ is an F-diagonal tensor .
	 \end{theorem}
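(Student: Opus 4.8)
The plan is to reduce the tensor factorization to a family of ordinary matrix SVDs through the block-circulant/Fourier correspondence that underlies the T-product. First I would recall the standard block-diagonalization identity: for any $\mathscr{A}\in\R^{n_1\times n_2\times n_3}$, with $F_{n_3}$ the normalized $n_3\times n_3$ discrete Fourier matrix and $\otimes$ the Kronecker product,
\[
(F_{n_3}\otimes I_{n_1})\,{\rm bcirc}(\mathscr{A})\,(F_{n_3}^{*}\otimes I_{n_2})={\rm blkdiag}(\widehat A_1,\dots,\widehat A_{n_3}),
\]
where $\widehat A_1,\dots,\widehat A_{n_3}\in\C^{n_1\times n_2}$ are the frontal slices of the tensor $\widehat{\mathscr{A}}$ obtained by applying $F_{n_3}$ along the third mode of $\mathscr{A}$. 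In this "Fourier domain" the product $\ast$ becomes the facewise (frontal-slice-wise) matrix product, the transpose $\mathscr{A}^{T}$ corresponds to the facewise conjugate transpose, and the map $\mathscr{A}\mapsto\widehat{\mathscr{A}}$ is a bijection; these are routine consequences of the identity above and of the definitions in Section~\ref{sec:section2}.

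Next, for each $i=1,\dots,n_3$ I would take a matrix SVD $\widehat A_i=\widehat U_i\,\widehat S_i\,\widehat V_i^{*}$ with $\widehat U_i\in\C^{n_1\times n_1}$ and $\widehat V_i\in\C^{n_2\times n_2}$ unitary and $\widehat S_i\in\R^{n_1\times n_2}$ diagonal with nonnegative entries. Stacking the $\widehat U_i$, $\widehat S_i$, $\widehat V_i$ as frontal slices yields tensors $\widehat{\mathscr{U}}$, $\widehat{\mathscr{S}}$, $\widehat{\mathscr{V}}$, and applying the inverse DFT along the third mode produces candidates $\mathscr{U}$, $\mathscr{S}$, $\mathscr{V}$. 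Since the facewise product in the Fourier domain corresponds to $\ast$ and the inverse DFT is invertible, the identities $\widehat A_i=\widehat U_i\widehat S_i\widehat V_i^{*}$ translate back to $\mathscr{A}=\mathscr{U}\ast\mathscr{S}\ast\mathscr{V}^{T}$; likewise $\widehat U_i^{*}\widehat U_i=I_{n_1}$ for all $i$ translates to $\mathscr{U}^{T}\ast\mathscr{U}=\mathscr{I}_{n_1n_1n_3}$ (and similarly $\mathscr{U}\ast\mathscr{U}^{T}=\mathscr{I}_{n_1n_1n_3}$, and the analogous relations for $\mathscr{V}$), because conjugating a block diagonal of unitary matrices by the unitary $F_{n_3}\otimes I$ keeps ${\rm bcirc}(\mathscr{U})$ and ${\rm bcirc}(\mathscr{V})$ orthogonal; thus $\mathscr{U}$ and $\mathscr{V}$ are orthogonal in the sense of the earlier definition. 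Finally, each $\widehat S_i$ is diagonal and the inverse DFT replaces each frontal slice of $\mathscr{S}$ by a linear combination of the $\widehat S_i$; a linear combination of diagonal matrices is diagonal, so $\mathscr{S}$ is F-diagonal.

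The one genuine subtlety, which I expect to be the main obstacle, is to guarantee that $\mathscr{U}$, $\mathscr{S}$, $\mathscr{V}$ can be taken \emph{real}. Because $\mathscr{A}$ is real, the Fourier slices obey the conjugate-symmetry relation $\widehat A_{n_3+2-i}=\overline{\widehat A_i}$ for $i=2,\dots,n_3$, so I would choose the SVDs consistently rather than independently: fix an SVD of $\widehat A_i$ on a set of representatives of the conjugate pairs and set $\widehat U_{n_3+2-i}=\overline{\widehat U_i}$, $\widehat S_{n_3+2-i}=\widehat S_i$, $\widehat V_{n_3+2-i}=\overline{\widehat V_i}$; for the self-conjugate indices ($i=1$, and also $i=n_3/2+1$ when $n_3$ is even) the slice $\widehat A_i$ is real and one selects a real SVD. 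With these choices $\widehat{\mathscr{U}}$, $\widehat{\mathscr{S}}$, $\widehat{\mathscr{V}}$ are conjugate-symmetric along the third mode, hence their inverse DFTs $\mathscr{U}$, $\mathscr{S}$, $\mathscr{V}$ are real-valued, as required. This is exactly the argument of \cite{B0290}; once the block-diagonalization identity and the dictionary between $\ast,{}^{T}$ and facewise operations are recorded, the remaining verifications are straightforward.
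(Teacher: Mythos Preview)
Your argument is correct and is precisely the Kilmer--Martin approach: block-diagonalize ${\rm bcirc}(\mathscr{A})$ via the DFT, take facewise SVDs, enforce conjugate symmetry on the paired Fourier slices so that the inverse DFT lands back in real tensors, and read off orthogonality and the F-diagonal structure from the facewise identities. Note that the present paper does not supply its own proof of this theorem; it is quoted from \cite{B0290}, and the only indication of method is the later remark that ``circulant matrices are diagonalizable via the normalized DFT'' together with the pseudocode in Algorithm~\ref{TSVD}, both of which match your outline exactly.
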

	
	 \medskip
	 \begin{theorem}\cite{B30112}
	 	Let $\mathscr{A}\in\mathbb{R}^{n_1\times n_2 \times n_3}$ be a real valued-tensor,  then 	there exist unitary tensors  $\mathscr{U}_{(k)}\in \mathbb{R}^{n_1\times k\times n_3} $ and $\mathscr{V}_{(k)}\in \mathbb{R}^{n_2\times k\times n_3} $ such that
	 	\begin{align}\label{TCSVD1}
	 	\mathscr{A}\approx\mathscr{A}_{k}=\mathscr{U}_{(k)}\ast\mathscr{S}_{(k)}\ast \mathscr{V}_{(k)}^{T}
	 	\end{align}
	 	and the Moore--Penrose inverse of tensor $\mathscr{A}_k$ is given by
	 	\begin{align*}
	 	\mathscr{A}_{k}^\dagger=\mathscr{V}_{(k)}\ast\mathscr{S}_{(k)}^\dagger \ast \mathscr{U}_{(k)}^{T},
	 	\end{align*}
	 	where $\mathscr{S}_{k}\in \mathbb{R}^{k\times k\times n_3} $ is an F-diagonal tensor and $k<min(n,m)$ is called the \textit{tubal-rank} of $\mathscr{A}$ based on the T-product.
	 \end{theorem}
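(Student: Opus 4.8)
The plan is to obtain $\mathscr{A}_k$ by truncating the full TSVD of Theorem~\ref{th4.1} and then to establish the Moore--Penrose formula by verifying the four defining conditions (a)--(d) of the Moore--Penrose definition. Concretely, write $\mathscr{A}=\mathscr{U}\ast\mathscr{S}\ast\mathscr{V}^{T}$ with $\mathscr{U},\mathscr{V}$ orthogonal and $\mathscr{S}$ F-diagonal. Passing to the Fourier domain along the third mode block-diagonalizes ${\rm bcirc}(\mathscr{A})$, so that the $\ast$-product becomes a frontal-slice-wise matrix product and $\mathscr{S}$ becomes a family of nonnegative diagonal matrices; the diagonal tubes of $\mathscr{S}$ are the ``singular tubes'' of $\mathscr{A}$, and keeping the $k$ dominant ones (the ordering that yields the optimal approximation, cf.\ \cite{B0290,B30112}) defines the truncation. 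Let $\mathscr{U}_{(k)}$ and $\mathscr{V}_{(k)}$ consist of the corresponding $k$ lateral slices of $\mathscr{U}$ and $\mathscr{V}$, and let $\mathscr{S}_{(k)}\in\mathbb{R}^{k\times k\times n_3}$ be the principal F-diagonal subtensor; then $\mathscr{A}_k:=\mathscr{U}_{(k)}\ast\mathscr{S}_{(k)}\ast\mathscr{V}_{(k)}^{T}$ is the best tubal-rank-$k$ approximation of $\mathscr{A}$, which is the content of the first displayed assertion.

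Next I would record that $\mathscr{U}_{(k)}$ and $\mathscr{V}_{(k)}$ are partial isometries, i.e.\ $\mathscr{U}_{(k)}^{T}\ast\mathscr{U}_{(k)}=\mathscr{I}_{kkn_3}$ and $\mathscr{V}_{(k)}^{T}\ast\mathscr{V}_{(k)}=\mathscr{I}_{kkn_3}$; this follows from $\mathscr{U}^{T}\ast\mathscr{U}=\mathscr{I}_{n_1n_1n_3}$ together with Proposition~\ref{prop2.1}, which identifies the tube $(\mathscr{U}_{(k)}^{T}\ast\mathscr{U}_{(k)})_{ij:}$ with $(\mathscr{U}(:,i,:))^{T}\ast\mathscr{U}(:,j,:)$. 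I would also introduce $\mathscr{S}_{(k)}^{\dagger}$, the F-diagonal tensor obtained by inverting, in the $\ast$-sense, the nonzero diagonal tubes of $\mathscr{S}_{(k)}$; its existence and the identities $\mathscr{S}_{(k)}\ast\mathscr{S}_{(k)}^{\dagger}\ast\mathscr{S}_{(k)}=\mathscr{S}_{(k)}$, $\mathscr{S}_{(k)}^{\dagger}\ast\mathscr{S}_{(k)}\ast\mathscr{S}_{(k)}^{\dagger}=\mathscr{S}_{(k)}^{\dagger}$, and the symmetry of $\mathscr{S}_{(k)}\ast\mathscr{S}_{(k)}^{\dagger}$ and $\mathscr{S}_{(k)}^{\dagger}\ast\mathscr{S}_{(k)}$ are immediate in the Fourier domain (or may be quoted from \cite{B0290}).

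With these in hand, set $\mathscr{X}:=\mathscr{V}_{(k)}\ast\mathscr{S}_{(k)}^{\dagger}\ast\mathscr{U}_{(k)}^{T}$ and verify (a)--(d) by direct substitution, using associativity of $\ast$ (Lemma~3.3 of \cite{B0290}) and collapsing $\mathscr{U}_{(k)}^{T}\ast\mathscr{U}_{(k)}$ and $\mathscr{V}_{(k)}^{T}\ast\mathscr{V}_{(k)}$ to identity tensors. For instance $\mathscr{A}_k\ast\mathscr{X}\ast\mathscr{A}_k=\mathscr{U}_{(k)}\ast(\mathscr{S}_{(k)}\ast\mathscr{S}_{(k)}^{\dagger}\ast\mathscr{S}_{(k)})\ast\mathscr{V}_{(k)}^{T}=\mathscr{A}_k$, and similarly $\mathscr{X}\ast\mathscr{A}_k\ast\mathscr{X}=\mathscr{X}$; the symmetry conditions follow since $\mathscr{A}_k\ast\mathscr{X}=\mathscr{U}_{(k)}\ast(\mathscr{S}_{(k)}\ast\mathscr{S}_{(k)}^{\dagger})\ast\mathscr{U}_{(k)}^{T}$ and $\mathscr{X}\ast\mathscr{A}_k=\mathscr{V}_{(k)}\ast(\mathscr{S}_{(k)}^{\dagger}\ast\mathscr{S}_{(k)})\ast\mathscr{V}_{(k)}^{T}$ are manifestly symmetric. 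By the uniqueness remark following the Moore--Penrose definition, $\mathscr{X}=\mathscr{A}_k^{\dagger}$, which is the second assertion.

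I expect the main obstacle to be conceptual rather than computational: making the notion of \emph{tubal rank} and of the truncation precise in a basis-independent way, since the singular tubes are distributed across all $n_3$ Fourier slices and the ordering that defines ``dominance'' is precisely the one that yields the optimal low-rank approximation. Once the truncation is arranged so that $\mathscr{U}_{(k)}^{T}\ast\mathscr{U}_{(k)}=\mathscr{V}_{(k)}^{T}\ast\mathscr{V}_{(k)}=\mathscr{I}_{kkn_3}$, everything else is the classical matrix SVD/pseudoinverse argument performed slice-wise in the Fourier domain. A secondary subtlety is that ``unitary'' for a rectangular tensor must be read as ``partial isometry'' (the excerpt defines orthogonality only for square tensors), so I would state this convention explicitly at the outset of the proof.
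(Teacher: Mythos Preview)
The paper does not supply its own proof of this theorem: it is quoted verbatim from \cite{B30112} (note the citation attached to the theorem header) and is invoked as an external result, with only the subsequent remarks and Algorithm~\ref{TCSVDalg} explaining how the truncation is implemented in practice. There is therefore no ``paper's proof'' to compare against.

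That said, your plan is a correct and standard way to establish the result. Truncating the full TSVD of Theorem~\ref{th4.1} to obtain $\mathscr{U}_{(k)},\mathscr{S}_{(k)},\mathscr{V}_{(k)}$, noting via Proposition~\ref{prop2.1} that $\mathscr{U}_{(k)}^{T}\ast\mathscr{U}_{(k)}=\mathscr{V}_{(k)}^{T}\ast\mathscr{V}_{(k)}=\mathscr{I}_{kkn_3}$, and then verifying the four Moore--Penrose conditions by direct substitution is exactly how the argument in \cite{B30112} proceeds (slice-wise in the Fourier domain). Your observation that ``unitary'' here must be read as ``partial isometry'' is also apt and worth stating explicitly, since the paper only defines orthogonality for square tensors.
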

	
	 \medskip
	 \noindent In \cite{B30112}, the decomposition (\ref{TCSVD1}) is called the tensor compact SVD (T-CSVD) of $\mathscr{A}$ and the tensor $\mathscr{A}_{k}$ can be regarded  the rank-k approximation of the tensor $\mathscr{A}$. Here we call it TTSVD. 
	 We comment here that a  similar compression strategy to \eqref{TCSVD1} is also
	 given by Kilmer and Martin\cite{B0290}. \\
	 %%Here by TCSVD, we refer to representation given in \cite[Section 4.1]{B0290}.
	 The TTSVD  of  $\mathscr{A}_{k}$ can be expressed as follows
	 \begin{align}\label{Eq26}
	 \mathscr{A}_{k}=\sum_{j=1}^{k} \bar{U}_{ j}  \ast d_{j}\ast \bar{V}_{j}^{T}
	 \end{align}
	 where $\bar{U}_j=\mathscr{U}_{(k)}(:,j,:) \in \mathbb{R}^{n_1\times 1\times n_3} $, $\bar{V}_j=\mathscr{V}_{(k)}(:,j,:)\in \mathbb{R}^{n_2\times 1\times n_3}$ and $d_{j}=\mathscr{S}(j,j,:)\in \mathbb{R}^{1\times 1\times n_3} $  for $j=1,2,\ldots,k$.\\
	
	 \begin{remark}\label{rem4.1}
	 	Let $\mathscr{A}=\mathscr{U}\ast \mathscr{S} \ast \mathscr{V}^T$ be the TSVD of $\mathscr{A}$. In view of Proposition \ref{prop2.1}, we can see that
	 	\[
	 	\bar{U}_i \ast \bar{U}_j=(\mathscr{U}^T \ast \mathscr{U})_{ij:} \qquad
	 	\rm{and} \qquad 	\bar{V}_i \ast \bar{V}_j=(\mathscr{V}^T \ast \mathscr{V})_{ij:}.
	 	\]
	 	Therefore, we have $\bar{U}_i \ast \bar{U}_j$ and  $\bar{V}_i \ast \bar{V}_j$ are zero tubal-scalar of length $n_3$ for $i\ne j$;
	 	$\bar{U}_i \ast \bar{U}_i={\rm \bf e}$ and
	 	$\bar{V}_i \ast \bar{V}_i={\rm \bf e}.$
	 \end{remark}
	
	 \medskip
	 \noindent The following theorem reveals that  $\mathscr{A}_{k}$ is an optimal approximation of a tensor, see \cite{B0290} for the proof.
	
	 \begin{theorem}\label{th4.3}
	 	Let the TSVD of $\mathscr{A}\in \mathbb{R}^{n_1\times n_2 \times n_3}$ be given by
	 	$\mathscr{A}=\mathscr{U} \ast \mathscr{S} \ast \mathscr{V}^T$ and for
	 	$k < \min(n_1,n_2)$, the tensor $\mathscr{A}_{k}$ given by \eqref{Eq26}
	 	satisfies
	 	\[
	 	\mathscr{A}_k={\rm argmin}_{\tilde{\mathscr{A}} \in M} \|\mathscr{A} - \tilde{\mathscr{A}}\|
	 	\]
	 	where $M=\{\mathscr{C}=\mathscr{X}\ast\mathscr{Y}~|~\mathscr{X}\in \mathbb{R}^{n_1\times k\times n_3}, \mathscr{Y}\in \mathbb{R}^{k\times n_2\times n_3}\}$.
	 \end{theorem}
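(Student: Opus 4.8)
The plan is to diagonalize the T-product in the discrete Fourier domain and thereby reduce the statement to $n_3$ independent matrix low-rank approximation problems, each settled by the classical Eckart--Young--Mirsky theorem; this reproduces, in the present notation, the route sketched in \cite{B0290}. Recall that for $\mathscr{A}\in\mathbb{R}^{n_1\times n_2\times n_3}$ the block-circulant matrix $\mathrm{bcirc}(\mathscr{A})$ is block-diagonalized by the (normalized) DFT: with $F$ the unitary DFT matrix of order $n_3$ one has $(F\otimes I_{n_1})\,\mathrm{bcirc}(\mathscr{A})\,(F^{*}\otimes I_{n_2})=\mathrm{blkdiag}(\widehat{A}^{(1)},\ldots,\widehat{A}^{(n_3)})$, where $\widehat{A}^{(1)},\ldots,\widehat{A}^{(n_3)}\in\mathbb{C}^{n_1\times n_2}$ are the frontal slices of the tensor $\widehat{\mathscr{A}}$ obtained by applying the DFT along the third mode. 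Under this correspondence the T-product becomes the slicewise matrix product, so that the $i$-th Fourier slice of $\mathscr{X}\ast\mathscr{Y}$ equals $\widehat{X}^{(i)}\widehat{Y}^{(i)}$, and Parseval's identity gives
\[
\|\mathscr{A}\|^{2}=\frac{1}{n_3}\sum_{i=1}^{n_3}\bigl\|\widehat{A}^{(i)}\bigr\|_{F}^{2},
\]
the factor $1/n_3$ being absent if one uses the normalized DFT. In either case, minimizing $\|\mathscr{A}-\widetilde{\mathscr{A}}\|$ over a class of approximants is the same as minimizing, over the corresponding class of Fourier-slice tuples, the sum of the squared Frobenius distances to the $\widehat{A}^{(i)}$.

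First I would translate the feasible set $M$. If $\mathscr{C}=\mathscr{X}\ast\mathscr{Y}$ with $\mathscr{X}\in\mathbb{R}^{n_1\times k\times n_3}$ and $\mathscr{Y}\in\mathbb{R}^{k\times n_2\times n_3}$, then each $\widehat{C}^{(i)}=\widehat{X}^{(i)}\widehat{Y}^{(i)}$ is a product of an $n_1\times k$ matrix with a $k\times n_2$ matrix, hence $\mathrm{rank}\,\widehat{C}^{(i)}\le k$ for every $i$; thus the image of $M$ in the Fourier domain lies in the set of tuples whose entries all have rank at most $k$. Next I would identify $\mathscr{A}_k$ there: the TSVD $\mathscr{A}=\mathscr{U}\ast\mathscr{S}\ast\mathscr{V}^{T}$ of Theorem~\ref{th4.1} corresponds to a family of matrix SVDs $\widehat{A}^{(i)}=\widehat{U}^{(i)}\widehat{S}^{(i)}(\widehat{V}^{(i)})^{*}$ with each $\widehat{S}^{(i)}$ diagonal (because $\mathscr{S}$ is F-diagonal) and, since the TSVD is assembled from these per-slice SVDs, with the diagonal entries of every $\widehat{S}^{(i)}$ ordered decreasingly. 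Consequently the truncation \eqref{Eq26} defining $\mathscr{A}_k$ amounts in the Fourier domain to replacing each $\widehat{A}^{(i)}$ by the sum of its $k$ leading rank-one SVD terms, i.e.\ by its best rank-$k$ approximation.

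With these two observations the estimate is immediate: for any $\mathscr{C}\in M$,
\[
\|\mathscr{A}-\mathscr{C}\|^{2}=\frac{1}{n_3}\sum_{i=1}^{n_3}\bigl\|\widehat{A}^{(i)}-\widehat{C}^{(i)}\bigr\|_{F}^{2}\ \ge\ \frac{1}{n_3}\sum_{i=1}^{n_3}\bigl\|\widehat{A}^{(i)}-\widehat{A_k}^{(i)}\bigr\|_{F}^{2}=\|\mathscr{A}-\mathscr{A}_k\|^{2},
\]
because each summand on the left is bounded below by the corresponding summand on the right by the matrix Eckart--Young--Mirsky theorem (recalling $\mathrm{rank}\,\widehat{C}^{(i)}\le k$). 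It remains to check that the lower bound is attained within $M$: writing $\mathscr{A}_k=(\mathscr{U}_{(k)}\ast\mathscr{S}_{(k)})\ast\mathscr{V}_{(k)}^{T}$, with $\mathscr{U}_{(k)}\ast\mathscr{S}_{(k)}\in\mathbb{R}^{n_1\times k\times n_3}$ and $\mathscr{V}_{(k)}^{T}\in\mathbb{R}^{k\times n_2\times n_3}$, exhibits $\mathscr{A}_k\in M$, so $\mathscr{A}_k$ is a minimizer.

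All of the individual steps are routine; the one point needing care is the exactness of the Fourier reformulation, and in particular the claim that the truncation \eqref{Eq26} really coincides with the slicewise best rank-$k$ truncation. That rests on the diagonal entries of each Fourier slice $\widehat{S}^{(i)}$ being consistently ordered — which is how the TSVD is built — and on keeping track of the conjugate symmetry relating $\widehat{A}^{(i)}$ and $\widehat{A}^{(n_3+2-i)}$, which is what guarantees that the reassembled approximation is again a real tensor. A Fourier-free alternative would argue directly from $\mathscr{A}=\mathscr{U}\ast\mathscr{S}\ast\mathscr{V}^{T}$, using the invariance $\|\mathscr{Q}\ast\mathscr{X}\|=\|\mathscr{X}\|$ of the tensor norm under orthogonal tensors to peel off $\mathscr{U}$ and $\mathscr{V}^{T}$, but it ultimately performs the same slicewise reduction.
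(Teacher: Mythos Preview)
Your argument is correct: the Fourier block-diagonalization reduces both the norm and the feasible set $M$ to $n_3$ decoupled matrix problems, and slicewise Eckart--Young together with the factorization $\mathscr{A}_k=(\mathscr{U}_{(k)}\ast\mathscr{S}_{(k)})\ast\mathscr{V}_{(k)}^{T}\in M$ finishes it. The paper does not supply its own proof of this statement but defers to \cite{B0290}, and your route is exactly the one used there, so there is nothing to contrast.
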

	
	 \medskip
	 \noindent In view of Theorem \ref{th4.3}, the Moore-Penrose inverse of tensor  $\mathscr{A}$ is efficiently estimated on $\widetilde{M}=\{\mathscr{C}=\mathscr{X}\ast\mathscr{Y}~|~\mathscr{X}\in \mathbb{R}^{n_2\times k\times n_3}, \mathscr{Y}\in \mathbb{R}^{k\times n_1\times n_3}\}$ by
\begin{align}\label{Aapprox}
	 \mathscr{A}^\dagger \approx \sum_{j=1}^{k} \bar{V}_{ j}  \ast d^{\dagger}_{j}\ast \bar{U}_{j}^{T},
\end{align}
in which tubal-scalar $d^{\dagger}_{j}\in \mathbb{R}^{1\times 1 \times n_3}$ stands for the  $(j,j,:)$ entry of   $\mathscr{S}_{(k)}^{\dagger}$. \\
	
	 \noindent The following theorem has a key role in deriving the results of the next section.
	 \begin{theorem}\label{th4.4} Assume that $\mathscr{A}\in \mathbb{R}^{n_1\times n_2 \times n_3}$ and $\mathscr{B}\in \mathbb{R}^{n_1\times s\times n_3}$. If $\hat{\mathscr{X}}=\mathscr{A}^\dagger \ast \mathscr{B}$, then
	 	\begin{equation}
	 	\|\mathscr{A}\ast \hat{\mathscr{X}} -\mathscr{B}\|=\min_{\mathscr{X}\in \mathbb{R}^{n_2\times s\times n_3}} \|\mathscr{A}\ast \mathscr{X} -\mathscr{B}\|.
	 	\end{equation}
	 	For any $\tilde{\mathscr{X}} \in \mathbb{R}^{n_1\times s\times n_3}$ such that $\tilde{\mathscr{X}}\ne \hat{\mathscr{X}} $ and $\|\mathscr{A}\ast \hat{\mathscr{X}} -\mathscr{B}\|=\|\mathscr{A}\ast \tilde{\mathscr{X}} -\mathscr{B}\|$, then $\|\hat{\mathscr{X}}\| < \|\tilde{\mathscr{X}}\|$.
	 \end{theorem}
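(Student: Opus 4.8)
The plan is to push the entire statement through the block--circulant matrix representation of the T-product, where it becomes the classical characterization of the minimum-norm least-squares solution, and then pull the conclusion back. Recall the dictionary: for conformable tensors one has ${\rm MatVec}(\mathscr{A}\ast \mathscr{X})={\rm bcirc}(\mathscr{A})\,{\rm MatVec}(\mathscr{X})$ (immediate from ${\rm MatVec}\circ{\rm Fold}={\rm id}$ and the definition of $\ast$); the map ${\rm MatVec}$ is a bijection from $\mathbb{R}^{n_2\times s\times n_3}$ onto $\mathbb{R}^{n_2n_3\times s}$ with $\|\mathscr{X}\|=\|{\rm MatVec}(\mathscr{X})\|_F$; and $\mathscr{A}\mapsto{\rm bcirc}(\mathscr{A})$ is multiplicative with ${\rm bcirc}(\mathscr{A}^T)={\rm bcirc}(\mathscr{A})^T$, since transposing each frontal slice and reversing slices $2,\dots,n_3$ is exactly the transpose of a block circulant matrix. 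Writing $M:={\rm bcirc}(\mathscr{A})$, $C:={\rm MatVec}(\mathscr{B})$ and $X:={\rm MatVec}(\mathscr{X})$, this gives $\|\mathscr{A}\ast\mathscr{X}-\mathscr{B}\|=\|MX-C\|_F$ and $\|\mathscr{X}\|=\|X\|_F$, while $X$ ranges over all of $\mathbb{R}^{n_2n_3\times s}$ as $\mathscr{X}$ ranges over all of $\mathbb{R}^{n_2\times s\times n_3}$.

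The one substantive ingredient is the identity ${\rm bcirc}(\mathscr{A}^\dagger)=M^\dagger$. I would verify it by applying ${\rm bcirc}$ to the four defining Penrose equations (a)--(d) for $\mathscr{A}^\dagger$: by multiplicativity and ${\rm bcirc}(\cdot^T)={\rm bcirc}(\cdot)^T$, these turn into exactly the four Penrose equations for ${\rm bcirc}(\mathscr{A}^\dagger)$ relative to $M$, so uniqueness of the matrix Moore-Penrose inverse forces ${\rm bcirc}(\mathscr{A}^\dagger)=M^\dagger$ (this is also implicit in \cite{B30112}). Hence ${\rm MatVec}(\hat{\mathscr{X}})={\rm bcirc}(\mathscr{A}^\dagger)\,C=M^\dagger C$.

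Now I would invoke the classical matrix fact: for any real $M$ and $C$, $\hat{X}_0:=M^\dagger C$ minimizes $\|MX-C\|_F$ over all $X$, and every other minimizer $\tilde{X}_0$ satisfies $\|\hat{X}_0\|_F<\|\tilde{X}_0\|_F$. Indeed, $X$ is a minimizer iff $MX=P_M C$, the orthogonal projection of $C$ onto ${\rm range}(M)$; hence $M\tilde{X}_0=M\hat{X}_0$, so $\tilde{X}_0-\hat{X}_0\in\ker M=({\rm range}\,M^T)^\perp$, whereas $\hat{X}_0=M^\dagger C\in{\rm range}\,M^T$, and therefore $\|\tilde{X}_0\|_F^2=\|\hat{X}_0\|_F^2+\|\tilde{X}_0-\hat{X}_0\|_F^2$, which is strictly larger when $\tilde{X}_0\neq\hat{X}_0$. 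Applying this with $M={\rm bcirc}(\mathscr{A})$ and $C={\rm MatVec}(\mathscr{B})$, then translating back through the norm-preserving bijection ${\rm MatVec}$ (which identifies $\hat{\mathscr{X}}$ with $M^\dagger C$ and the tensor minimization with the matrix one), yields both assertions of the theorem.

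The argument is essentially bookkeeping; the only point needing genuine care is the compatibility ${\rm bcirc}(\mathscr{A}^\dagger)=({\rm bcirc}(\mathscr{A}))^\dagger$. If one prefers to stay inside the tensor formalism, an equivalent route uses the TSVD $\mathscr{A}=\mathscr{U}\ast\mathscr{S}\ast\mathscr{V}^T$ of Theorem \ref{th4.1}: since $\mathscr{U},\mathscr{V}$ are orthogonal and multiplication by an orthogonal tensor preserves $\|\cdot\|$ (by the identities in Remark \ref{rem1.2new}), the problem reduces to minimizing $\|\mathscr{S}\ast\mathscr{Y}-\mathscr{U}^T\ast\mathscr{B}\|$ over $\mathscr{Y}=\mathscr{V}^T\ast\mathscr{X}$ with $\|\mathscr{Y}\|=\|\mathscr{X}\|$; because $\mathscr{S}$ is F-diagonal this decouples, in the Fourier domain along the third mode, into scalar least-squares problems whose minimum-norm solutions assemble into $\mathscr{S}^\dagger\ast(\mathscr{U}^T\ast\mathscr{B})$, and then $\hat{\mathscr{X}}=\mathscr{V}\ast\mathscr{S}^\dagger\ast\mathscr{U}^T\ast\mathscr{B}=\mathscr{A}^\dagger\ast\mathscr{B}$ by the TSVD formula for the Moore-Penrose inverse.
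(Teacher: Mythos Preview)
Your proof is correct. Your primary route is genuinely different from the paper's, while the alternative you sketch at the end is essentially what the paper does.

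The paper proceeds via the TSVD $\mathscr{A}=\mathscr{U}\ast\mathscr{S}\ast\mathscr{V}^T$: it uses orthogonal invariance to reduce to $\|\mathscr{S}\ast\mathscr{Z}-\mathscr{W}\|$ with $\mathscr{Z}=\mathscr{V}^T\ast\mathscr{X}$, then passes to the Fourier domain with $F_{n_3}\otimes I$ so that ${\rm bcirc}(\mathscr{S})$ becomes a block-diagonal matrix $\Sigma={\rm diag}(\Sigma_1,\dots,\Sigma_{n_3})$ with diagonal blocks, invokes the classical result that $\hat Z=\Sigma^\dagger W$ is the minimum-Frobenius-norm minimizer, and unwinds. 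Your primary argument bypasses both the TSVD and the DFT diagonalization: you use only that ${\rm bcirc}$ is a norm-compatible ring homomorphism with ${\rm bcirc}(\mathscr{A}^T)={\rm bcirc}(\mathscr{A})^T$, so the tensor Penrose conditions map to the matrix ones and ${\rm bcirc}(\mathscr{A}^\dagger)=({\rm bcirc}(\mathscr{A}))^\dagger$; then the matrix minimum-norm least-squares characterization applies directly to $M={\rm bcirc}(\mathscr{A})$. This is shorter and more structural --- the Moore--Penrose inverse transports along the homomorphism, so there is nothing left to compute. The paper's route, on the other hand, makes explicit contact with the frontal-slice Fourier decoupling that underlies the TSVD algorithms used elsewhere in the paper, which is pedagogically useful in context even if less economical as a proof of this particular statement.
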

	 \begin{proof}  
	 	Let $A=\mathscr{U}^T\ast \mathscr{S} \ast \mathscr{V}$ be the TSVD of $\mathscr{A}$. From \cite[Lemma 3.19]{B0290}, it can be seen that
	 	\[
	 	\|\mathscr{A}\ast \mathscr{X} -\mathscr{B}\|=\|\mathscr{U}^T \ast (\mathscr{A}\ast \mathscr{X} -\mathscr{B})\|.
	 	\]
	 	By some straightforward computations, we have
	 	\begin{eqnarray*}
	 		\|\mathscr{U}^T \ast (\mathscr{A}\ast \mathscr{X} -\mathscr{B})\| & = & \|  \mathscr{S} \ast \mathscr{V}^T \ast \mathscr{X} -\mathscr{U}^T \ast \mathscr{B}\|.
	 	\end{eqnarray*}
	 	Setting $\mathscr{Z}=\mathscr{V}^T \ast \mathscr{X}$ and $\mathscr{W}=\mathscr{U}^T \ast \mathscr{B}$, we get
	 	\begin{eqnarray}
	 		\nonumber \|\mathscr{U}^T \ast (\mathscr{A}\ast \mathscr{X} -\mathscr{B})\| & = &\|\mathscr{S} \ast \mathscr{Z} - \mathscr{W}\|\\
	 		 \nonumber &=&\|(F_{n_3}\otimes I_{n_1}) {\rm MatVec} (\mathscr{S} \ast \mathscr{Z})- (F_{n_3}\otimes I_{n_1}){\rm MatVec} (\mathscr{W})\|_F\\
	 		&=&\|(F_{n_3}\otimes I_{n_1}){\rm bcirc}(\mathscr{S})  {\rm MatVec} (\mathscr{Z})- (F_{n_3}\otimes I_{n_1}){\rm MatVec} (\mathscr{W})\|_F \label{eqnn}
	 		%&=&\|(F_{n_3}\otimes I_{n_1}){\rm bcirc}(\mathscr{S}) (F_{n_3}^*\otimes I_{n_2}) Z- W\|_F,
	 	\end{eqnarray}
	where  $\|\cdot\|_F$ is the well-known Frobenius matrix norm, the notation $\otimes$ stands for the Kronecker product and the matrix $F_{n_3}$ is  the discrete Fourier matrix of size $n_3\times n_3$ defined by
	 	(see \cite{Chan})
	 	\[{F_{{n_3}}} = \frac{1}{{\sqrt {{n_3}} }}\left( {\begin{array}{*{20}{c}}
	 		1&1&1&1& \cdots &1\\
	 		1&{{\omega ^{}}}&{{\omega ^2}}&{{\omega ^3}}& \cdots &{{\omega ^{{n_3} - 1}}}\\
	 		1&{{\omega ^2}}&{{\omega ^4}}&{{\omega ^6}}& \cdots &{{\omega ^{2({n_3} - 1)}}}\\
	 		1&{{\omega ^3}}&{{\omega ^6}}&{{\omega ^9}}& \cdots &{{\omega ^{3({n_3} - 1)}}}\\
	 		\vdots & \vdots & \vdots & \vdots & \ddots & \vdots \\
	 		1&{{\omega ^{{n_3} - 1}}}&{{\omega ^{2({n_3} - 1)}}}&{{\omega ^{3({n_3} - 1)}}}& \cdots &{{\omega ^{({n_3} - 1)({n_3} - 1)}}}
	 		\end{array}} \right),\]
	 	\noindent in which $\omega =e^{-2\pi {\bf i} /n_3}$ is the primitive $n_3$--th root of unity in which ${\bf i}=\sqrt{-1}$ and $F^*$
	 	denotes the conjugate transpose of $F$.	 	

For simplicity we set $Z=(F_{n_3}\otimes I_{n_2}){\rm MatVec} (\mathscr{Z})$ and $W=(F_{n_3}\otimes I_{n_1}){\rm MatVec} (\mathscr{W})$, hence Eq.\ref{eqnn}
can be rewritten as follows: 
	\begin{eqnarray*}
	 		\|\mathscr{U}^T \ast (\mathscr{A}\ast \mathscr{X} -\mathscr{B})\| 
	 		&=&\|(F_{n_3}\otimes I_{n_1}){\rm bcirc}(\mathscr{S}) (F_{n_3}^*\otimes I_{n_2}) Z- W\|_F.
	 	\end{eqnarray*}
Notice that
	 	\begin{equation} \label{Eq30}
	 	\|Z\|_F=\|{\rm MatVec} (\mathscr{Z})\|_F=\|\mathscr{Z}\|=\|\mathscr{V}^T\ast \mathscr{X}\|=\|\mathscr{X}\|.
	 	\end{equation}	
	 	From \cite{B0290,B30112}, it is known that there exists diagonal (rectangular) matrices $\Sigma_1,\Sigma_2,\ldots,\Sigma_{n_3}$ such that
	 	\[
	 	(F_{n_3}\otimes I_{n_1}){\rm bcirc}(\mathscr{S}) (F_{n_3}^*\otimes I_{n_2}) =\Sigma=\left( {\begin{array}{*{20}{c}}
	 		{{\Sigma_1}}&{}&{}&{}\\
	 		{}&{{\Sigma_2}}&{}&{}\\
	 		{}&{}& \ddots &{}\\
	 		{}&{}&{}&{{\Sigma_{{n_3}}}}
	 		\end{array}} \right),
	 	\]
	 	 and
	 	\[
	 	(F_{n_3}^*\otimes I_{n_1}){\rm bcirc}(\mathscr{S^\dagger}) (F_{n_3}\otimes I_{n_2}) =\Sigma^\dagger=\left( {\begin{array}{*{20}{c}}
	 		{{\Sigma_1^\dagger}}&{}&{}&{}\\
	 		{}&{{\Sigma_2^\dagger}}&{}&{}\\
	 		{}&{}& \ddots &{}\\
	 		{}&{}&{}&{{\Sigma_{{n_3}}^\dagger}}
	 		\end{array}} \right).
	 	\]
From the above computations, we have
	 	\[
	 	\|\mathscr{A}\ast \mathscr{X} -\mathscr{B}\| = \|\Sigma~ Z- W\|_F.
	 	\]
	 	It is well-known from the literature that the minimum Frobenius norm solution of
	 	$\|\Sigma~ Z- W\|_F$
	 	over $\mathbb{R}^{n_2n_3\times s}$ is given by $\hat{Z}=\Sigma^\dagger W$, i.e.,
	 	$$\hat{Z}={\rm argmin}_{Z\in \mathbb{R}^{n_2n_3\times s}} \|\Sigma~ Z- W\|_F.$$
	 	Let $\hat{\mathscr{X}}$ be the tensor such that
	 	%$(F_{n_3}^*\otimes I_{n_2}) \hat{Z}$. That is,
	 	\begin{eqnarray*}
	 		{\rm MatVec}(\mathscr{V}^T\ast \hat{\mathscr{X}})
	 		& = &  (F_{n_3}^*\otimes I_{n_2}) \Sigma^\dagger W \\
	 		& = & (F_{n_3}^*\otimes I_{n_2}) \Sigma^\dagger (F_{n_3}\otimes I_{n_1}){\rm MatVec} (\mathscr{W})\\
	 		& = & {\rm bcirc}(\mathscr{S}^\dagger) {\rm MatVec}(\mathscr{U}^T\ast \mathscr{B}).
	 	\end{eqnarray*}
	 	It is immediate to deduce the following equality
	 	\[
	 	{\rm MatVec}(\mathscr{V}^T\ast \hat{\mathscr{X}}) = {\rm MatVec}(\mathscr{S}^\dagger\ast \mathscr{U}^T\ast \mathscr{B}),
	 	\]
	 	which is equivalent to say that $\mathscr{V}^T\ast \hat{\mathscr{X}}=\mathscr{S}^\dagger\ast \mathscr{U}^T\ast \mathscr{B}$.
	 	Finally, the result follows from  \eqref{Eq30}.
	 \end{proof}
	
	 \medskip
	
	 \noindent Similar to the TSVD \cite{B0290}, the TTSVD can be obtained
	 using the fast Fourier transform. Notice that circulant matrices are diagonalizable via the normalized DFT.
	 This fact was used for proving Theorem \ref{th4.1} and deriving the Matlab pseudocode for TSVD; see
	 Algorithm \ref{TSVD} for more details. Here,  we further use this fact and present a Matlab pseudocode
	 for computing TTSVD and the corresponding approximation of Moore-Penrose inverse in Algorithm
	 \ref{TCSVDalg}. Note that the Matlab function \verb|pinv(.)| reduces to \verb|inv(.)|
	 when the diagonal matrix $S$ in Step 2 of the algorithm is nonsingular.

	 %%%%%%%%%%%%%%%%%%%%%%%%%%%%%%%%%%%%%%%%%%%%%%%%%%%%%%%%%%%%%%%%%%%%%%%%%%%%%%%%%%%

	 %Algorithm 1 ---TSVD
	
	 %%%%%%%%%%%%%%%%%%%%%%%%%%%%%%%%%%%%%%%%%%%%%%%%%%%%%%%%%%%%%%%%%%%%%%%%%%%%%%%%%%%
	
	 \begin{algorithm}[h]
	 	\caption{Matlab pseudocode for TSVD \cite{B029}}\label{TSVD}
	 	{\bf Input.} $\mathscr{A}\in \mathbb{R}^{n_1\times n_2 \times n_3}$;\\
	 	Step 1. Set
	 	$\mathscr{D}=\rm{fft}(\mathscr{A},[~],3)$;\\
	 	Step 2. for $i=1,2,\ldots,n_3$\\
	 	$~~~~~~~~~~~~~~~~~~~[U,S,V]={\rm{svd}}(\mathscr{D}(:,:,i))$;\\
	 	$~~~~~~~~~~~~~~~~~~~\mathscr{U}(:,:,i)=U$;
	 	$\mathscr{V}(:,:,i)=V$;
	 	$\mathscr{S}(:,:,i)=S$;\\
	 	$~~~~~~~~~~~~$end\\
	 	Step 3. $\mathscr{U}_{}={\rm{ifft}}(\mathscr{U},[~],3)$; $\mathscr{V}_{}={\rm{ifft}}(\mathscr{V},[~],3)$; $\mathscr{S}_{}={\rm{ifft}}(\mathscr{S},[~],3)$;\\
	 	{\bf Output.} $\mathscr{A}=\mathscr{U}_{}\ast \mathscr{S}_{} \ast \mathscr{V}^T_{}$.
	 \end{algorithm}
	 %%%%%%%%%%%%%%%%%%%%%%%%%%%%%%%%%%%%%%%%%%%%%%%%%%%%%%%%%%%%%%%%%%%%%%%%%%%%%%%%%%%

	 %Algorithm 2 ---TCSVD
	
	 %%%%%%%%%%%%%%%%%%%%%%%%%%%%%%%%%%%%%%%%%%%%%%%%%%%%%%%%%%%%%%%%%%%%%%%%%%%%%%%%%%%
	 \begin{algorithm}[h]
	 	\caption{Matlab pseudocode for TTSVD and the corresponding approximation of  Moore--Penrose inverse}\label{TCSVDalg}
	 	{\bf Input.} $\mathscr{A}\in \mathbb{R}^{n_1\times n_2 \times n_3}$ and $k\le \min(n_1,n_2)$;\\
	 	Step 1. Set
	 	$\mathscr{D}=\rm{fft}(\mathscr{A},[~],3)$;\\
	 	Step 2. for $i=1,2,\ldots,n_3$\\
	 	$~~~~~~~~~~~~~~~~~~~[U,S,V]={\rm{svd}}(\mathscr{D}(:,:,i))$;\\
	 	$~~~~~~~~~~~~~~~~~~~\mathscr{U}(:,:,i)=U(:,1:k)$;
	 	$\mathscr{V}(:,:,i)=V(:,1:k)$;\\
	 	$~~~~~~~~~~~~~~~~~~~\mathscr{S}(:,:,i)=S(1:k,1:k)$;
	 	$\mathscr{S}^\dagger(:,:,i)={\rm{pinv}}(S(1:k,1:k))$;\\
	 	$~~~~~~~~~~~~$end\\
	 	Step 3. $\mathscr{U}_{(k)}={\rm{ifft}}(\mathscr{U},[~],3)$; $\mathscr{V}_{(k)}={\rm{ifft}}(\mathscr{V},[~],3)$; $\mathscr{S}_{(k)}={\rm{ifft}}(\mathscr{S},[~],3)$; $\mathscr{S}_{(k)}^\dagger={\rm{ifft}}(\mathscr{S}^\dagger,[~],3)$;\\
	 	{\bf Output.} $\mathscr{A}_k=\mathscr{U}_{(k)}\ast \mathscr{S}_{(k)} \ast \mathscr{V}^T_{(k)}$ and
	 	$\mathscr{A}_k^\dagger=\mathscr{V}_{(k)}\ast \mathscr{S}_{(k)}^\dagger \ast \mathscr{U}^T_{(k)}$.
	 \end{algorithm}

	 	 \subsection{Tensor extrapolation methods applied to TTSVD   sequences in  ill-posed problems}\label{sub4.2}
	 	
	 	 Let $\mathscr{A}\in \mathbb{R}^{n_{1}\times n_{2}\times n_{3} }$ be given and consider its TSVD. That is let us apply Algorithm \ref{TSVD} for $\mathscr{A}$. In the case that the matrix ${\rm bcirc}(\mathscr{A})$ has too many singular values being close to zero, the tensor  $\mathscr{A}$ is called ill-determined rank.\\
	 	
	 	 \noindent In this subsection, we discuss solutions of  tensor equations in the following form
	 	 \begin{align}\label{equ1}
	 	 \mathscr{A}\ast \widetilde{\mathscr{X}}=\widetilde {\mathscr{B}}
	 	 \end{align}
	 	 where the ill-determined rank tensor $\mathscr{A}\in \mathbb{R}^{n_{1}\times n_{2}\times n_{3} }$  and right-hand side $\widetilde {\mathscr{B}}\in \mathbb{R}^{n_{1}\times n_{2}\times n_{3} }$ are given and $\mathscr{X}\in \mathbb{R}^{n_{2}\times n_{2}\times n_{3} }$ is an unknown tensor to be determined when the values of $n_2$ and $n_3$ are small or moderate.
	 	 Our  goal here is to find an approximate solution for the above tensor equation.
	 	 %
	 	 %
	 	 %
	 	 %  , i.e., $\mathscr{A}$ has many singular values of
	 	 %  different orders of magnitude close to zero, and $\bar{\mathscr{X}}\in \mathbb{R}^{n_{2}\times 1\times n_{3} } $ is the unknown tensor to be determined.
	 	 Systems of tensors equations (\ref{equ1})  with a tensor of ill-determined rank often are referred to as linear discrete tensor ill-posed problems. They arise   in several areas in science and engineering, such as the restoration of color
	 	 and multispectral images \cite{B13,B015,B016}, blind source separation \cite{B17},  when
	 	 one seeks to determine the cause of an observed effect. In these applications, the right-hand side $\widetilde {\mathscr{B}} \in \mathbb{R}^{n_{1}\times n_2\times n_{3} }$ is typically
	 	 contaminated by an error $\mathscr{E}$, i.e.,
	 	 $
	 	 \widetilde {\mathscr{B}}=\tilde{\bar {\mathscr{B}}}+\mathscr{E}
	 	 $
	 	 where $\tilde{\bar {\mathscr{B}}}$ denotes the unknown error-free right-hand side.
	 	 We are interested in determining an accurate approximation of the (least-squares) solution $\tilde{\bar {\mathscr{X}}}$ of the following (in)consistent  tensor equation
	 	 \begin{align*}
	 	 %\label{equ12}
	 	 \mathscr{A}\ast \tilde{\bar {\mathscr{X}}}=\widetilde{\bar {\mathscr{B}}}
	 	 \end{align*}
	 	 with error-free right-hand side by solving \eqref{equ1}. From Theorem \ref{th4.4}, it is known that the exact solution is given by
	 	 \begin{align*}
	 	 %\label{equa1tild}
	 	 \tilde{\bar {\mathscr{X}}}=\mathscr{A}^\dagger\ast  \tilde{\bar {\mathscr{B}}}.
	 	 \end{align*}
	 	
	 	 In the matrix case, when the size of the rank deficient matrix $A$ is moderate,  a popular method for computing an approximation for the (least-squares) solution of (in)consistent linear system of equations $Ax=b$ consist in using  the Truncated SVD which replaces the matrix $A^\dagger$ by a low-rank approximation; see, e.g., Golub and Van Loan \cite{B08} or Hansen \cite{B081}.  Following the same idea, we approximate   $\widetilde{\mathscr{X}}$ by  using the expression (\ref{Aapprox}).
	 	 In view of Theorem \ref{th4.3}, we approximate $\mathscr{A}_{}^\dagger\ast  \bar {\mathscr{B}}$ by $\widetilde {\mathscr{X}}_{k}$ given as follow:
	 	 \begin{align*}
	 	 \widetilde {\mathscr{X}}_{k}=\sum_{j=1}^{k} \bar{V}_{ j}  \ast d^{\dagger}_{j}\ast \bar{U}_{j}^{T} \ast   \widetilde  {\mathscr{B}}.
	 	 \end{align*}
	 	 For the matrix case, Jbilou  et al. \cite{B360}  proposed the application of the RRE to the sequence of vectors generated by the truncated SVD. In the remainder of this paper, we follow the same idea and consider the application of TRRE to the sequence of tensors generated by the truncated TSVD (TTSVD). To do so, we set
	 	 \begin{align*}
	 	 {Y}_{i} =\Delta ^{2} {S}_{i-1}    \quad  1\leq i\leq k-2,
	 	 \end{align*}
	 	 in which  $(\bar{S}_{k})_{k\geq 0}$ is the tensor sequence  generated by the  TTSVD. Thus,
	 	 \begin{align}\label{ttsvdtrre}
	 	 {S}_{k} &=\mathscr{A}_{k}^\dagger \ast \widetilde {\mathscr{B}}= \sum_{j=1}^{k} \bar{V}_{ j}  \ast  \delta_{j}
	 	 \end{align}
	 	 where  $\delta_{j}= d^{\dagger}_{j}\ast \bar{U}_{j}^{T} \ast  \bar{\mathscr{B}}$ and ${S}_{0}$ is set to be a zero tensor of order ${n_2\times n_2\times n_3}$.
	 	 It can be observed that
	 	 \begin{align}
	 	 \Delta {S}_{k-1}={S}_{k}-\bar{S}_{k-1}= \bar{V}_{k}\ast\delta _{k}.
	 	 \end{align}

	 	 We assume here that $\delta_ {k}^T\ast \delta_ {k}$ is invertible and notice that if $\delta_ {k}^T\ast \delta_ {k}$ is zero, then we can delete the corresponding member from the sequence (\ref{ttsvdtrre}) and compute the next one by keeping the same index notation. To overcome the cases $\delta_ {k}^T\ast \delta_ {k}$ is numerically non invertible,  we can use a small shift $\epsilon \mathscr{I}$ (say $\epsilon=1e-10$) on the positive semi definite tensor $\delta_ {k}^T\ast \delta_ {k}$; see Remark \ref{rem1.2new}. \\ Let  $\Delta{\mathscr{S}_{k}}$ and $\Delta^2{\mathscr{S}_{k}}$ be  4-mode tensors whose
	 	 $i$-th frontal slices are given by $\Delta {S}_{i-1}$ and  $\Delta^2 {S}_{i-1}$ for $i=1,2,\ldots, k$, respectively, for $i=1,2,\ldots, k$
	 	
	 	 %$= [\Delta \bar{S}_{0},\ldots, \Delta \bar{S}_{k-1}]$ is given by:
	 	 %  $$\Delta \mathscr{S}_{k}= [ \bar{V}_{1}\ast\delta _{1} ,\ldots, \bar{V}_{k}\ast\delta _{k} ]$$
	 	 %
	 	 \noindent Notice  that  $\Delta^2 {S}_{j-1}= \bar{ V}_{j+1}\ast\delta_{j+1} -\bar{ V}_{j}\ast\delta_{j}$ for $j=1,2,\ldots k$.  Hence,
	 	 \begin{align*}
	 	 %(\Delta^2 {\mathscr{S}_{k}}  \diamondsuit \Delta {\mathscr{S}_{k}})_{:::ji}&=(\delta_{i+1}^T\ast  \bar{ V}_{i+1}^T-\delta_i^T\ast  \bar{ V}_{i}^T)\ast \bar{V}_{j}\ast\delta _{j} \\
	 	 %&=\delta_{i+1}^T\ast  \bar{ V}_{i+1}^T\ast \bar{V}_{j}\ast\delta _{j}-\delta_i^T\ast  \bar{ V}_{i} \ast \bar{V}_{j}\ast\delta _{j}\quad {\rm for} \quad i,j=1,2,\ldots,k.
	 	 % \begin{align*}
	 	 (\Delta^2 {\mathscr{S}_{k}}  \diamondsuit \Delta {\mathscr{S}_{k}})_{:::ji}&=(\delta_{i+1}^T\ast  \bar{ V}_{i+1}^T-\delta_i^T\ast  \bar{ V}_{i}^T)\ast \bar{V}_{j}\ast\delta _{j} \\
	 	 &=\delta_{i+1}^T\ast  \bar{ V}_{i+1}^T\ast \bar{V}_{j}\ast\delta _{j}-\delta_i^T\ast  \bar{ V}_{i}^{T}\ast \bar{V}_{j}\ast\delta _{j}\quad {\rm for} \quad i,j=1,2,\ldots,k.
	 	 \end{align*}
	 	
	 	 \noindent Using  Remark \ref{rem4.1}, we can deduce that the nonzero frontal slices of $\Delta^2\tilde{\mathscr{S}_{k}}  \diamondsuit \Delta {\mathscr{S}_{k}}$ are given by
	 	 \[
	 	 (\Delta^2 {\mathscr{S}_{k}}  \diamondsuit \Delta {\mathscr{S}_{k}})_{:::(i+1)i}=\delta_{i+1}^T\ast \delta _{i+1},
	 	 \]
	 	 and
	 	 \[
	 	 (\Delta^2{\mathscr{S}_{k}}  \diamondsuit \Delta {\mathscr{S}_{k}})_{:::ii}=-\delta_{i}^T\ast \delta _{i}.
	 	 \]
	 	 Straightforward computations together with Remark \ref{rem4.1} show that the frontal slices of the 4-mode tensor
	 	 $(\Delta^2 {\mathscr{S}_{k}}   \diamondsuit \Delta {S}_{k})$ are equal to zero except the last frontal slice being equal to $\delta_{k+1}^T \ast \delta_{k+1}$.
	 	 For notational simplicity, we define
	 	 $\Theta_{i+1}=\delta_{i+1}^T\ast\delta_{i+1}$ for $i=0,\ldots k$.
	 	 In summary, we need to solve the following tensor equation
	 	 \begin{align*}
	 	 (\Delta^2 {\mathscr{S}_{k}}  \diamondsuit \Delta {\mathscr{S}_{k}}) \star {\beta}{_k} =-(\Delta^2 {\mathscr{S}_{k}}   \diamondsuit \Delta  {S}_{k}),
	 	 \end{align*}
	 	 where $ {\beta}{_k}$ is a 4-mode tensor with frontal slices ${\beta}^{(k)}_0,{\beta}^{(k)}_1,\ldots,{\beta}^{(k)}_{k-1}$. Or equivalently, we need to find the solution of the following system of tensor equations:
	 	 \begin{align}\label{sysofequa35}
	 	 \begin{cases}
	 	 -\Theta_1 \ast \beta_{0}^{(k)}+ \Theta_2 \ast \beta_{1}^{(k)}  &=\mathscr{O}  \\
	 	 ~~~~~~~~~~~~~~~~~~  -\Theta_2 \ast \beta_{1}^{(k)}+ \Theta_3 \ast \beta_{2}^{(k)}  &=\mathscr{O}  \\
	 	 ~~~~~~~~~~~~~~~~~~ ~~~~~ \ddots \\
	 	 ~~~~~~~~~~~~~~~~~~~~~~~~~~~~~~~~~~~~-\Theta_k \ast \beta_{k-1}^{(k)} &=-\Theta_{k+1} \\
	 	 \end{cases},	
	 	 \end{align}
	 	 here $\mathscr{O}$ stands for zero tensor of order $n_2\times n_2\times n_3$. It is immediate to see that
	 	 \begin{equation}\label{Eq39}
	 	 \beta_{i}^{(k)}=(\Theta _{i+1} )^{-1}\ast\Theta_{k+1} \quad 0 \leq i<k,
	 	 \end{equation}
	 	 is a solution of \eqref{sysofequa35}. Evidently, we have 
	 	 \begin{equation}\label{Eq40}
	 	 \sum_{i=0}^{k}\beta_{i}^{(k)} =\sum_{i=0}^{k} (\Theta_{i+1} )^{-1}\ast\Theta_{k+1},
	 	 \end{equation}
	 	\noindent where $\beta_{k}^{(k)}=\mathscr{I}_{n_2n_2n_3}$. By the discussions in Subsection \ref{sub3.1}, from Eq. \eqref{37}, we can derive 
	 	 $\gamma_{j}^{(k)}$ for $j=0,1,\ldots,k-1$ noticing that $\gamma_{k}^{(k)}=\mathscr{I}_{n_2n_2n_3}-\sum_{i=0}^{k-1}\gamma_{i}^{(k)}$.
	 	 For $i=0,1,\ldots,k-1$, we can further compute $\alpha_i^{(k)}$ for $i=0,1,\ldots,k-1$ by \eqref{scalairesalpha}.
%	 	 \begin{align}\label{alphak1}
%	 	 \alpha_i^{(k)}=\sum_{j=i+1}^{k} \gamma_j^{(k)}.
%	 	 \end{align}
	 	
	 	 \noindent Finally, the extrapolated  third tensor  can be written as follows:
	 	 \begin{align}\label{alphak}
	 	 {T}_{k} = \Delta{\mathscr{S}_{k}} \star \alpha_{}^{(k)}
	 	 %  \sum_{j=1}^{k}
	 	 %  \bar{V}_{ j}  \ast d^{-1}_{j}\ast \bar{U}_{j}^{T} \ast   \bar {B}\ast\alpha_{j-1}^{(k)}
	 	 \end{align}
	 	 where $\Delta {\mathscr{S}_{k}}$ and $\alpha_{}^{(k)} $ are 4-mode tensors
	 	 whose $j$-th frontal slices are receptively given by   $\Delta {S}_{j-1}={S}_{j}-{S}_{j-1}= \bar{V}_{j}\ast\delta _{j}$ and
	 	 $\alpha_{j-1}^{(k)}$
	 	 for
	 	 $j=1,2,\ldots,k$.
	 	
	 	 \noindent The generalized residual can be written in the following form
	 	 \[
	 	 {R}({T} _{k})= \Delta {\mathscr{S}_{k}} \star \gamma^{(k)}=\sum_{i=0}^{k}\bar{V}_{i+1}\ast \delta_{i+1} \ast \gamma_i^{(k)},
	 	 \]
	 	 where $\gamma^{(k)}$ is a 4-mode tensor with frontal slices $\gamma_0^{(k)},\gamma_2^{(k)},\ldots,\gamma_{k-1}^{(k)}$.
	 	 By Remark \ref{rem4.1}, one can derive
	 	 \begin{align*}
	 	 {R}({T} _{k})^T \ast {R}({T} _{k}) &= \sum_{i=0}^{k}(\gamma_{i}^{(k)})^{T}\ast \delta _{i+1}^{T}\ast \delta _{i+1}\ast\gamma_{i}^{(k)} \\
	 	 &=\sum_{i=0}^{k}(\gamma_{i}^{(k)})^{T}\ast \Theta _{i+1}  \ast\gamma_{i}^{(k)}
	 	 \end{align*}
	 	 Now by Eqs. \eqref{37}, \eqref{Eq39} and \eqref{Eq40}, we can observe
	 	 \begin{align*}
	 	 {R}({T} _{k})^T \ast {R}({T} _{k}) &= \Theta _{k}  \ast\gamma_{k-1}^{(k)}.
	 	 \end{align*}
	 	 It is known that (see \cite{B0290})
	 	 \begin{equation}\label{Eq44}
	 	 \|{R}({T} _{k})\|^2 = {\rm trace} \left( \left(\Theta _{k}  \ast\gamma_{k-1}^{(k)}\right)_{::1}\right).
	 	 \end{equation}
	 	 For ill-posed problems, the value of $\|{R}({T} _{k})\|$ decrease when $k$ increases and is sufficiently small. However, the norm of ${R}({T} _{k})$ may increase with $k$. Hence, similar to  \cite{B366}, we may need to exploit an alternative stopping criterion when the problem is ill-posed.
	 	 To this end, we can use
	 	 \begin{equation}\label{eta}
	 	 \eta_k:= \frac{\| {T}_{k+1}-{T}_{k}\|}{\|\widetilde{T}_{k}\|}=\frac{\sqrt{{\rm trace}\left( \left(({T}_{k+1}-{T}_{k})^T\ast(\bar{T}_{k+1}-{T}_{k})\right)_{::1}\right)}}{ \sqrt{{\rm trace} \left( \left({T}_{k}^T\ast {T}_{k}\right)_{::1}\right)}}.
	 	 \end{equation}
	 	 Using Remark \ref{rem4.1} and some computations, one may simplify the above relation in the following way,
	 	 \[
	 	 ({T}_{k+1}-{T}_{k})^T\ast({T}_{k+1}-{T}_{k})=\sum_{j=1}^{k} (\alpha_{j-1}^{(k+1)}-\alpha_{j-1}^{(k)})^T\ast \Theta_j \ast (\alpha_{j-1}^{(k+1)}-\alpha_{j-1}^{(k)})+\alpha_k^{(k+1)}\ast \Theta_{k+1}\ast \alpha_k^{(k+1)},
	 	 \]
	 	 and
	 	 \[
	 	 {T}_{k}^T\ast {T}_{k}=\sum_{j=1}^{k} (\alpha_{j-1}^{(k)})^T \ast \Theta_j \ast \alpha_{j-1}^{(k)}.
	 	 \]
	 	
	 	 \noindent We end this section by summarizing the above discussions in Algorithm \ref{BTAA}.
	 	 %%
	 	 %%  \begin{algorithm}[H]
	 	 %%  	\caption{The TRRE-TCSVD Algorithm  }\label{BTAA}
	 	 %%  	{\bf Input.} $\mathscr{A},\mathscr{B}\in \mathbb{R}^{n_1\times n_2 \times n_3}$, tolerance  $\epsilon$  and $k\le \min(n_1,n_3)$ for TCSVD of $\mathscr{A}$.;\\
	 	 %%  	\noindent 	Step 1. Run Algorithm \ref{TCSVD} to compute an approximation  for its Moore-Penrose inverse, i.e., 	$\mathscr{A}_k^\dagger=\mathscr{V}_{(k)}\ast \mathscr{S}_{(k)}^\dagger \ast \mathscr{U}^T_{(k)}$.\\
	 	 %%  	\noindent 	Step 2. Set $\bar{V}_i=\mathscr{V}_{(k)}(:,i,:)$ and $d_i^\dagger=\mathscr{S}_{(k)}^\dagger(i,i,:)$ for $i=1,2,\ldots,k$.\\
	 	 %%  	\noindent 	Step 3. Set ${  S}_{0}=\mathscr{O}, ~{S}_{1}=\bar{V}_{ 1}  \ast    d^{\dagger}_{1}\ast \bar{U}_{1}^{T} \ast  \widetilde{ \mathscr{B}}$,  ${T}_{1}={  S}_{1}$ and $k=2$.\\
	 	 %%  	\noindent 	Step 4.  ${\rm tol}=1$.\\	
	 	 %%  	 \noindent 	Step 5.  while ${\rm tol}< \epsilon$\\
	 	 %%  	 $~~~~~~~~~~~~~~~~~~~$Compute ${  S}_{k}$ from  (\ref{ttsvdtrre})\\
	 	 %%  	 $~~~~~~~~~~~~~~~~~~~$Compute  $ \gamma_{i}^{(k)}$ and $\alpha_{i}^{(k)}$ for $i= 0,\ldots,k-1$ using (\ref{gammak}) and (\ref{alphak1}).\\
	 	 %%  	 $~~~~~~~~~~~~~~~~~~~$Compute the approximation ${T}_{ k}$ using  (\ref{alphak}).\\
	 	 %%  	 $~~~~~~~~~~~~~~~~~~~$Compute $\|{R}({T} _{k})\|$ (cf. \eqref{Eq44})  and $\eta_k$ (cf. \eqref{eta}).\\
	 	 %%  	 $~~~~~~~~~~~~~~~~~~~$${\rm tol}=\min(\|{R}({T} _{k})\|,\eta_k)$.\\
	 	 %%  	 $~~~~~~~~~~~~~~~~~~~$$k=k+1$.\\
	 	 %%  	 $~~~~~~~~~~~~$end
	 	 %%  	  \end{algorithm}
	 	
	 	 \begin{algorithm}[H]
	 	 	\caption{The TRRE-TSVD Algorithm}\label{BTAA}
	 	 	{\bf Input.} $\mathscr{A},\mathscr{B}\in \mathbb{R}^{n_1\times n_2 \times n_3}$, tolerance  $\epsilon$  and $k\le \min(n_1,n_3)$ for TTSVD of $\mathscr{A}$;\\
	 	 	\noindent Step 1. Run Algorithm \ref{TSVD} to compute an approximation  for its Moore-Penrose inverse, i.e., $\mathscr{A}^\dagger=\mathscr{V}_{}\ast \mathscr{S}_{}^\dagger \ast \mathscr{U}^T_{}$.\\
	 	 	%   \noindent Step 2. Set $\bar{V}_i=\mathscr{V}_{(k)}(:,i,$ and $d_i^\dagger=\mathscr{S}_{(k)}^\dagger(i,i,$ for $i=1,2,\ldots,k$.\\
	 	 	\noindent Step 2. Set $\bar{  S}_{0}=\mathscr{O}, ~\bar{S}_{1}=\bar{V}_{ 1}  \ast    d^{\dagger}_{1}\ast \bar{U}_{1}^{T} \ast   \bar{ \mathscr{B}}$,  $\bar{T}_{1}=\bar{  S}_{1}$ and $k=2$.\\
	 	 	\noindent Step 3.  ${\rm tol}=1$.\\
	 	 	\noindent Step 4.  while ${\rm tol}< \epsilon$\\
	 	 	$~~~~~~~~~~~~~~~~~~~$Compute $\bar{  S}_{k}$ from  (\ref{ttsvdtrre})\\ 
	 	 	$~~~~~~~~~~~~~~~~~~~$Compute  $ \gamma_{i}^{(k)}$ and $\alpha_{i}^{(k)}$ for $i= 0,\ldots,k-1$ using (\ref{37}) and \eqref{scalairesalpha} where\\ $~~~~~~~~~~~~~~~~~~~~~\gamma_{k}^{(k)}=\mathscr{I}_{n_2n_2n_3}-\sum_{i=0}^{k-1}\gamma_{i}^{(k)}$.\\
	 	 	$~~~~~~~~~~~~~~~~~~~$Compute the approximation $\bar{T}_{ k}$ using  (\ref{alphak}).\\
	 	 	$~~~~~~~~~~~~~~~~~~~~~$Compute $\|\tilde{R}(\bar{T} _{k})\|$ (cf. \eqref{Eq44})  and $\eta_k$ (cf. \eqref{eta}).\\
	 	 	$~~~~~~~~~~~~~~~~~~~~~$${\rm tol}=\min(\|\tilde{R}(\bar{T} _{k})\|,\eta_k)$.\\
	 	 	$~~~~~~~~~~~~~~~~~~~~~$$k=k+1$.\\
	 	 	$~~~~~~~~~~~~$end
	 	 \end{algorithm}

\section{Conclusion} We proposed extrapolation methods in tensor structure. The first class contains the tensor extrapolation polynomial-type methods while for the second class, we introduced the tenasor topological $\epsilon$-transformations. These techniques can be regarded as generalizations of the well-known vector and matrix extrapolation methods. Besides, we introduced some new products between two tensors which can simplify the derivation of  extrapolation methods based on tensor format.
Some theoretical results were also established including the properties of introduced tensor products and an expression for the minimum norm least-square solution of a tensor equation. Finally, the proposed technique was applied on the sequence of tensors corresponding to the truncated tensor singular value decomposition which can be used for solving tensor ill-posed problems.

	 \vspace{0.3cm}
	 %\section*{References}

 \end{document}